\newtheorem{proposition}{Proposition}[section]
\newtheorem{lemma}[proposition]{Lemma}
\newtheorem{theorem}[proposition]{Theorem}
\theoremstyle{definition}
\newtheorem{definition}[proposition]{Definition}
\newtheorem{example}[proposition]{Example}
\newtheorem{examples}[proposition]{Examples}
\newtheorem{remarks}[proposition]{Remarks}
\newcommand{\thlabel}[1]{\label{th:#1}}
\newcommand{\thref}[1]{Theorem~\ref{th:#1}}
\newcommand{\selabel}[1]{\label{se:#1}}
\newcommand{\seref}[1]{Section~\ref{se:#1}}
\newcommand{\lelabel}[1]{\label{le:#1}}
\newcommand{\leref}[1]{Lemma~\ref{le:#1}}
\newcommand{\relabel}[1]{\label{re:#1}}
\newcommand{\exlabel}[1]{\label{ex:#1}}
\newcommand{\exref}[1]{Example~\ref{ex:#1}}
\newcommand{\delabel}[1]{\label{de:#1}}
\newcommand{\deref}[1]{Definition~\ref{de:#1}}
\newcommand{\eqlabel}[1]{\label{eq:#1}}
\newcommand{\equref}[1]{(\ref{eq:#1})}
\def\lan{\langle}
\def\ran{\rangle}
\def\ot{\otimes}
\newcommand{\Cc}{\mathcal{C}}
\def\*C{{}^*\hspace*{-1pt}{\Cc}}
\def\text#1{{\rm {\rm #1}}}
\begin{document}

\title[Classifying complements]
{Classifying complements for Hopf algebras and Lie algebras}

\author{A. L. Agore}
\address{Faculty of Engineering, Vrije Universiteit Brussel, Pleinlaan 2, B-1050 Brussels, Belgium}
\email{ana.agore@vub.ac.be and ana.agore@gmail.com}

\author{G. Militaru}
\address{Faculty of Mathematics and Computer Science, University of Bucharest, Str.
Academiei 14, RO-010014 Bucharest 1, Romania}
\email{gigel.militaru@fmi.unibuc.ro and gigel.militaru@gmail.com}
\subjclass[2010]{16T10, 16T05, 16S40}

\thanks{A.L. Agore is research fellow ''Aspirant'' of FWO-Vlaanderen.
This work was supported by a grant of the Romanian National
Authority for Scientific Research, CNCS-UEFISCDI, grant no.
88/05.10.2011.}

\subjclass[2010]{16T05, 16S40} \keywords{Complements, Bicrossed
products, Deformations of a Hopf algebra and Lie algebras}

%\maketitle

\begin{abstract}
Let $A \subseteq E$ be a given extension of Hopf (respectively
Lie) algebras. We answer the \emph{classifying complements
problem} (CCP) which consists of describing and classifying all
complements of $A$ in $E$. If $H$ is a given complement then all
the other complements are obtained from $H$ by a certain type of
deformation. We establish a bijective correspondence between the
isomorphism classes of all complements of $A$ in $E$ and a
cohomological type object ${\mathcal H}{\mathcal A}^{2} (H, A \, |
\, (\triangleright, \triangleleft) )$, where $(\triangleright,
\triangleleft)$ is the matched pair associated to $H$. The
factorization index $[E: A]^f$ is introduced as a numerical
measure of the (CCP). For two $n$-th roots of unity we construct a
$4n^2$-dimensional Hopf algebra whose factorization index over the
group algebra is arbitrary large.
\end{abstract}

\maketitle

%\tableofcontents

\section*{Introduction}
Let $\Cc$ be the category of groups, Lie algebras, Hopf algebras,
etc. and $A \subset E$ a given subobject of $E$ in $\Cc$. A
subobject $H$ of $E$ is called a \emph{complement} of $A$ in $E$
(or an \emph{$A$-complement} of $E$) if $E$ can be written as a
'product' of $A$ and $H$ such that $A$ and $H$ have 'minimal
intersection' in $E$; the meaning of 'product' and 'minimal
intersection' depends on the given category $\mathcal{C}$. We
denote by $[E: A]^f$ the cardinal of the (possibly empty)
isomorphism classes of all $A$-complements of $E$ and we call it
the \emph{factorization index} of $A$ in $E$. A natural question
arises:

\textbf{Classifying complements problem (CCP):} \textit{Let $A
\subset E$ be a given subobject of $E$ in $\Cc$. If an
$A$-complement of $E$ exists, describe explicitly, classify all
$A$-complements of $E$ and compute the factorization index $[E:
A]^f$.}

To start with, let $\Cc = {\mathcal Gr}$ be the category of
groups. An $A$-complement of a group $E$ is a subgroup $H \leq E$
such that $E = A H$ and $A \cap H = \{1\}$. An $A$-complement of
$E$, if exists, is not necessarily unique. The basic example is
the following: consider $S_3$ to be the symmetric group viewed as
a subgroup in $S_4$ by considering $4$ to be a fixed point. Then,
the factorization index $[S_4 : S_3]^f = 2$. For more details on
the group case we refer the reader to \cite{am-2012}. Hence, we
expect to obtain non-trivial results for the (CCP) whose
difficulty depends on the category $\Cc$ as well as on the
extension $A \subset E$ in $\Cc$.

The aim of this paper is to give the answer to the (CCP) if $\Cc$
is the category of Hopf (respectively Lie) algebras. There exists
a general principle: $H$ is an $A$-complement of $E$ in a given
category $\Cc$ if and only if $E \cong A \bowtie H$, where $A
\bowtie H$ is a '\emph{bicrossed product}' in the category $\Cc$
associated to a '\emph{matched pair}' between the objects $A$ and
$H$. This principle becomes a theorem when $\Cc$ is the category
of groups or groupoids \cite{AA}, algebras \cite{cap}, Hopf
algebras \cite{majid2}, Lie groups or Lie algebras \cite{majid3},
locally compact quantum groups \cite{VV}, multiplier Hopf algebras
\cite{DeDW}. Let $H$ be a given $A$-complement of $E$. Hence,
there exists a canonical isomorphism $A\bowtie H \cong E$ in
$\Cc$. Now, the description and the classification part of the
(CCP) is obtained from the following subsequent question: describe
and classify all objects ${\mathbb H}$ in $\Cc$ such that there
exists an isomorphism $A \bowtie H \cong A \bowtie {\mathbb H}$ in
$\Cc$. This can also be viewed as a descent type question: the
classification of all $A$-complements of $E$ needs a parallel
theory similar to what is called the classification of forms in
the classical descent theory \cite{KO}, \cite{parker}.

As we will see, the theoretical answers to the (CCP) for Hopf
algebras and Lie algebras follow a common path which might
generate a method of tackling this problem in different other
categories. In order to describe and then classify all the
complements of a given extension $A \subset E$ it is enough to
know only one complement $H$. All the other $A$-complements of $E$
are obtained from $H$ by a certain type of deformation which
involves some special maps $r: H \to A$ associated to the
canonical matched pair $(A, H, \triangleleft, \triangleright)$,
called deformation maps. To each deformation map $r: H \to A$ we
associate a new $A$-complement denoted by $H_{r}$ and, conversely,
for any $A$-complement ${\mathbb H}$ there exists a deformation
map $r: H \to A$ of the canonical matched pair $(A, H,
\triangleleft, \triangleright)$ such that ${\mathbb H} \cong H_r$.

The paper is organized as follows. In \seref{prel} we shall review
the Majid's bicrossed product \cite{majid} associated to a matched
pair of Hopf algebras $(A, H, \triangleleft, \triangleright)$.
\seref{3aplicatii} offers the answer to the (CCP) problem for Hopf
algebras. The answer will be given in three steps. In
\thref{deformatANM} a new type of deformation of a given Hopf
algebra $H$ is introduced. This deformation $H_r$ is a new Hopf
algebra called the \emph{$r$-deformation} of $H$ and is associated
to an arbitrary matched pair of Hopf algebras $(A, H,
\triangleright, \triangleleft)$ and to a deformation map $r: H \to
A$ in the sense of \deref{amisur}. Furthermore, $H_r$ is an
$A$-complement of the bicrossed product $A \bowtie H$. Now, let $A
\subseteq E$ be an extension of Hopf algebras and $H$ a given
$A$-complement of $E$. The description of all $A$-complements of
$E$ is given in \thref{descformelor}: any other $A$-complement
${\mathbb H}$ of $E$ is isomorphic as a Hopf algebra with an
$H_{r}$, for some deformation map $r : H \to A$ of the canonical
matched pair  $(A, H, \triangleleft, \triangleright)$ associated
to $H$. Finally, \thref{clasformelor} provides the classification
of $A$-complements of $E$: there exists a bijection between the
isomorphism classes of all $A$-complements of $E$ and a
cohomological type object ${\mathcal H}{\mathcal A}^{2} (H, A \, |
\, (\triangleright, \triangleleft) )$ and this bijection is
explicitly described. In particular, we obtain the formula for
computing the factorization index of a given extension $A
\subseteq E$ of Hopf algebras: $[E : A]^f = | {\mathcal
H}{\mathcal A}^{2} (H, A \, | \, (\triangleright, \triangleleft)
)|$. In \seref{secexemple} we shall construct an example of a Hopf
algebra extension of a given factorization index in
\thref{findmare}. This is the extension $k[C_n] \subseteq H_{4n^2,
\, \omega, \, \omega'}$, where $H_{4n^2, \, \omega, \, \omega'}$
is a $4n^2$-dimensional quantum group associated to two distinct
$n$-th roots of unity $\omega$ and $\omega'$. In \seref{deflie} we
give the answer to the (CCP) in the case when $\Cc$ is the
category of Lie algebras. The main result of the section is
\thref{clasformelorLie}: if $\mathfrak{g}$ is a Lie subalgebra of
$\Xi$ and $\mathfrak{h}$ is a fixed $\mathfrak{g}$-complement of
$\Xi$, then the isomorphism classes of all
$\mathfrak{g}$-complements of $\Xi$ are parameterized by a certain
cohomological object denoted by ${\mathcal H}{\mathcal A}^{2}
(\mathfrak{h}, \mathfrak{g} \, | \, (\triangleright,
\triangleleft) )$ which is explicitly constructed.

\section{Preliminaries}\selabel{prel}
Unless explicitly specified otherwise, all coalgebras, Hopf
algebras, Lie algebras, tensor products, homomorphisms, and so on
are over a commutative ring $k$. For a coalgebra $C$, we use
Sweedler's $\Sigma$-notation: $\Delta(c) = c_{(1)}\ot c_{(2)}$,
$(I\ot\Delta)\Delta(c) = c_{(1)}\ot c_{(2)}\ot c_{(3)}$ (summation
understood). A Hopf subalgebra of a Hopf algebra $E$ is a Hopf
algebra inclusion $A \subseteq E$ that splits $k$-linearly. Let
$A$ and $H$ be two Hopf algebras. $H$ is called a right $A$-module
coalgebra if $H$ is a coalgebra in the monoidal category
${\mathcal M}_A $ of right $A$-modules, i.e. there exists
$\triangleleft : H \otimes A \rightarrow H$ a morphism of
coalgebras such that $(H, \triangleleft) $ is a right $A$-module.
Similarly, $A$ is a left $H$-module coalgebra if $A$ is a
coalgebra in the monoidal category of left $H$-modules.

Let $A \subseteq E$ be a Hopf subalgebra of $E$. A Hopf subalgebra
$H\subseteq E$ is called a \emph{right complement} of $A$ in $E$
(or a \emph{right $A$-complement} of $E$) if the multiplication
map $A \ot H \to E$, $a \ot h \mapsto a \, h $ is bijective.
Similarly, $H$ is a left complement of $A$ in $E$ if the
multiplication map $H \ot A \to E$ is bijective. In the case that
$E$ has a bijective antipode, then the concepts of right/left
$A$-complement coincide \cite[Proposition 3.1]{abm1}. Throughout
this paper by an $A$-complement we mean a right $A$-complement.

Let $H$ be a (right) $A$-complement of $E$: in this case we say
that the Hopf algebra $E$ factorizes through $A$ and $H$. The
bicrossed product of two Hopf algebras was introduced by Majid in
\cite[Proposition 3.12]{majid} under the name of double cross
product. Let $A$ and $H$ be two Hopf algebras and $\triangleleft :
H \otimes A \rightarrow H$, $\triangleright: H \otimes A
\rightarrow A$ two morphisms of coalgebras such that the following
normalizing conditions hold for any $h \in H$, $a \in A$:
\begin{equation}\eqlabel{mp1}
h \triangleright1_{A} = \varepsilon_{H}(h)1_{A}, \quad 1_{H} \triangleright a = a,
\quad 1_{H} \triangleleft a = \varepsilon_{A}(a)1_{H}, \quad h\triangleleft 1_{A} =
h
\end{equation}
We denote by $A \bowtie H$ the $k$-module $A \otimes H$ together
with the multiplication:
\begin{equation}\eqlabel{0010}
(a \bowtie h) \cdot (c \bowtie g):= a (h_{(1)}\triangleright c_{(1)})
\bowtie (h_{(2)} \triangleleft c_{(2)}) g
\end{equation}
for all $a$, $c\in A$, $h$, $g\in H$, where we denoted $a\ot h$ by
$a\bowtie h$. The object $A \bowtie H$ is called the \textit{
bicrossed product} of $A$ and $H$ if $A \bowtie H$ is a Hopf
algebra with the multiplication given by \equref{0010}, the unit
$1_{A} \bowtie 1_{H}$ and the coalgebra structure given by the
tensor product of coalgebras. The next theorem provides necessary
and sufficient conditions for $A \bowtie H$ to be a bicrossed
product.

\begin{theorem}\thlabel{bicrsreconst}
Let $A$, $H$ be two Hopf algebras and $\triangleleft : H \otimes A
\rightarrow H$, $\triangleright: H \otimes A \rightarrow A$ two
morphisms of coalgebras satisfying the normalizing conditions
\equref{mp1}. The following statements are equivalent:

$(1)$ $A \bowtie H$ is a bicrossed product;

$(2)$ $(H, \triangleleft)$ is a right $A$-module coalgebra, $(A, \triangleright)$ is
a left $H$-module coalgebra and the following compatibilities hold
for any $a$, $b\in A$, $g$, $h\in H$.
\begin{eqnarray}
g \triangleright(ab) &{=}& (g_{(1)} \triangleright a_{(1)}) \bigl ( (g_{(2)}\triangleleft
a_{(2)})\triangleright b \bigl)
\eqlabel{mp2} \\
(g h) \triangleleft a &{=}& \bigl( g \triangleleft (h_{(1)} \triangleright a_{(1)}) \bigl)
(h_{(2)} \triangleleft a_{(2)})
\eqlabel{mp3} \\
g_{(1)} \triangleleft a_{(1)} \otimes g_{(2)} \triangleright a_{(2)} &{=}& g_{(2)}
\triangleleft a_{(2)} \otimes g_{(1)} \triangleright a_{(1)} \eqlabel{mp4}
\end{eqnarray}
\end{theorem}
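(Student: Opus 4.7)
The plan is to prove the equivalence by extracting the module-coalgebra structures and the compatibilities \equref{mp2}--\equref{mp4} from the associativity and the bialgebra axiom of $A \bowtie H$, and conversely to reconstruct the bialgebra (and Hopf) structure from these data.

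For the implication $(1) \Rightarrow (2)$, the idea is to exploit the two natural injections $i_A : A \to A \bowtie H$, $a \mapsto a \bowtie 1_H$ and $i_H : H \to A \bowtie H$, $h \mapsto 1_A \bowtie h$. The normalizing conditions \equref{mp1} guarantee that $i_A$ and $i_H$ are coalgebra maps and, via the multiplication rule \equref{0010}, algebra maps; moreover for all $a \in A$, $h \in H$ one checks
$$
(1_A \bowtie h)(a \bowtie 1_H) = (h_{(1)} \triangleright a_{(1)}) \bowtie (h_{(2)} \triangleleft a_{(2)}),
$$
so the actions $\triangleright$ and $\triangleleft$ are recovered purely from the product in $A \bowtie H$. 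Applying $\Delta = \Delta_A \ot \Delta_H$ to both sides (using that $\Delta$ is an algebra map) yields precisely \equref{mp4}, and then the coalgebra-morphism requirement for $\triangleright$ and $\triangleleft$ follows by projecting on each tensor factor. Associativity of products of the form $(1_A \bowtie h)(a \bowtie 1_H)(b \bowtie 1_H)$ produces \equref{mp2}, and associativity of $(1_A \bowtie g)(1_A \bowtie h)(a \bowtie 1_H)$ produces \equref{mp3}; finally the left and right module axioms come from the analogous bracketings $(1_A \bowtie h)\bigl((1_A \bowtie g)(a \bowtie 1_H)\bigr)$ and $\bigl((1_A \bowtie h)(a \bowtie 1_H)\bigr)(b \bowtie 1_H)$.

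For $(2) \Rightarrow (1)$, the strategy is a direct verification. Unitality is immediate from \equref{mp1}. For associativity of \equref{0010} one expands $\bigl((a \bowtie h)(b \bowtie g)\bigr)(c \bowtie f)$ and $(a \bowtie h)\bigl((b \bowtie g)(c \bowtie f)\bigr)$; each of the three compatibilities enters exactly once — \equref{mp2} to move a product through $\triangleright$, \equref{mp3} to move a product through $\triangleleft$, and \equref{mp4} to commute the middle factors when both sides are finally aligned — together with the two module axioms to collapse iterated actions. Compatibility of the multiplication with the tensor coalgebra structure again uses \equref{mp4} (and the coalgebra-map property of $\triangleright$, $\triangleleft$), while the counit is multiplicative by the normalizing conditions. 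This yields a bialgebra structure on $A \bowtie H$.

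For the antipode, one shows that the formula \equref{antipbic} defines a convolution inverse of the identity. Here one checks $S_{A \bowtie H} * \Id = \eta \varepsilon$ by inserting \equref{antipbic}, invoking \equref{0010}, and repeatedly using that $\triangleright$ and $\triangleleft$ are coalgebra maps to slide $S_A$, $S_H$ across tensor factors until the defining antipode equations for $A$ and $H$ collapse everything to $1_A \bowtie 1_H$; the opposite convolution is symmetric after applying \equref{mp4} once. I expect the main technical obstacle to be exactly this associativity bookkeeping in the reverse direction: each of \equref{mp2}, \equref{mp3}, \equref{mp4} is needed in a specific order, and one has to use the module-coalgebra hypotheses to reindex Sweedler components so that the two sides land in a comparable normal form. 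The antipode verification is parallel in difficulty but follows the same pattern, so it does not raise a qualitatively new issue.
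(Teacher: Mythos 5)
Your argument is correct, but it is worth pointing out that the paper does not actually prove \thref{bicrsreconst} from scratch: it delegates $(2)\Rightarrow(1)$ to \cite[Theorem 7.2.2]{majid2} (or \cite[Theorem IX 2.3]{Kassel}) and obtains $(1)\Rightarrow(2)$ as the special case of a more general theorem on unified products with a cocycle \cite[Theorem 2.4]{am-2011}, taking the cocycle to be trivial. Your direction $(2)\Rightarrow(1)$ is exactly the direct verification carried out in those references, so there is nothing new there. For $(1)\Rightarrow(2)$ your route is genuinely more elementary and self-contained: you recover the actions from $(1_A \bowtie h)(a \bowtie 1_H)$, extract \equref{mp4} by comparing $\Delta$ of this product with the product of the $\Delta$'s, and read off \equref{mp2}, \equref{mp3} and the module axioms from the four bracketings of triple products involving $i_A$ and $i_H$; this avoids importing the cocycle machinery and makes transparent exactly which axiom of $A\bowtie H$ forces which compatibility. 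What the paper's citation buys in exchange is uniformity (the same general theorem handles crossed and unified products simultaneously), at the cost of opacity here. Two small points to tighten: the coalgebra-map property of $\triangleright$ and $\triangleleft$ is already a standing hypothesis of the theorem, so you need not (and cannot meaningfully) rederive it by ``projecting on each tensor factor''; and for the antipode it is cleaner to note that $\Id_{A\bowtie H}=(i_A\pi_A)\ast(i_H\pi_H)$ in the convolution algebra, with each factor convolution invertible because $i_A$, $i_H$ are bialgebra maps from Hopf algebras, so the inverse is $(i_H S_H\pi_H)\ast(i_A S_A\pi_A)$, which is precisely \equref{antipbic} --- this replaces your two hand-checked convolution identities (where ``applying \equref{mp4} once'' for the second one is asserted rather than shown) by a formal argument.
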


\begin{proof}
$(2) \Rightarrow (1)$ This is just \cite[Theorem 7.2.2]{majid2} or
\cite[Theorem IX 2.3]{Kassel}.

$(1) \Rightarrow (2)$ Follows as a special case of \cite[Theorem
2.4]{am-2011} if we consider $f : H \ot H \to A$ to be the trivial
cocycle, i.e. $f (g, h) = \varepsilon_H(g) \varepsilon_H(h)$. See
also \cite[Examples 2.5]{am-2011} for details.
\end{proof}

A \emph{matched pair} of Hopf algebras is a system $(A, H,
\triangleleft, \triangleright)$, where $(H, \triangleleft)$ is a
right $A$-module coalgebra, $(A, \triangleright)$ is a left
$H$-module coalgebra such that the compatibility conditions
\equref{mp1} and \equref{mp2}-\equref{mp4} hold.

\begin{examples} \exlabel{exempleban}
1. Let $(A, \triangleright)$ be a left $H$-module coalgebra and
consider $H$ as a right $A$-module coalgebra via the trivial
action, i.e. $h \triangleleft a = \varepsilon_A(a) h$. Then $(A,
H, \triangleleft, \triangleright)$ is a matched pair of Hopf
algebras if and only if $(A, \triangleright)$ is a left $H$-module
algebra and the following compatibility condition holds
\begin{equation}\eqlabel{smash1}
g_{(1)} \otimes g_{(2)} \triangleright a =  g_{(2)} \otimes g_{(1)} \triangleright a
\end{equation}
for all $g \in H$ and $a\in A$. In this case, the associated
bicrossed product $A\bowtie H = A\# H$ is the semi-direct (smash)
product of Hopf algebras as defined by Molnar \cite{Mo}.

2. The fundamental example of a bicrossed product is the Drinfel'd
double $D(H)$. Let $H$ be a finite dimensional Hopf algebra over a
field $k$. Then we have a matched pair of Hopf algebras $(
(H^*)^{\rm cop}, H, \triangleleft, \triangleright)$, where the
actions $\triangleleft$ and $\triangleright$ are defined by:
\begin{equation} \eqlabel{dubluact}
h \triangleleft h^* := \lan h^*, \, S_H^{-1} (h_{(3)}) h_{(1)}
\ran h_{(2)}, \quad h \triangleright h^* := \lan h^*, \, S_H^{-1}
(h_{(2)}) \, ? \, h_{(1)} \ran
\end{equation}
for all $h\in H$ and $h^* \in H^*$ (\cite[Theorem
IX.3.5]{Kassel}). The Drinfel'd double of $H$ is the bicrossed
product associated to this matched pair, i.e. $D(H) = (H^*)^{\rm
cop} \bowtie H$.

3. Let $k$ be a field of characteristic zero, $\mathfrak{g}$,
$\mathfrak{h}$ two Lie algebras and $U(\mathfrak{g})$,
$U(\mathfrak{h})$ the corresponding universal enveloping algebras.
Then there is a bijection between the matched pairs of Lie
algebras $(\mathfrak{g}, \mathfrak{h}, \triangleright,
\triangleleft)$ \cite[Definition 8.3.1]{majid2} and the matched
pairs of Hopf algebras $(U(\mathfrak{g}), U(\mathfrak{h}),
\widehat{\triangleright}, \widehat{\triangleleft})$. The bijection
is given such that there exists a Hopf algebra isomorphism
$U(\mathfrak{g}) \bowtie U(\mathfrak{h}) \cong U(\mathfrak{g}
\bowtie \mathfrak{h})$ (\cite[Proposition 2.4]{Masuoka}).
\end{examples}

A bicrossed product $A \bowtie H$ will be viewed as a left
$A$-module via the restriction of scalars through the canonical
inclusion $i_A : A \to A \bowtie H$, $i_A (a) = a \bowtie 1_H$,
for all $a\in A$. The next result is \cite[Theorem 7.2.3]{majid2}:
Let $A \subseteq E$ be a Hopf subalgebra and $H$ a $A$-complement
of $E$. Then there exists a matched pair of Hopf algebras $(A, H,
\triangleleft, \triangleright)$ such that the multiplication map
$$
m_{E}: A \bowtie H \to E, \qquad m_{E}(a \bowtie h) = ah
$$
for all $a\in A$ and $h\in H$ is an isomorphism of Hopf algebras.
The actions of the matched pair $(A, H, \triangleleft,
\triangleright)$ are constructed as follows for all $a \in A$, $h
\in H$:
\begin{equation}\eqlabel{deduc}
h a = (h_{(1)} \triangleright a_{(1)}) (h_{(2)} \triangleleft
a_{(2)})
\end{equation}

From now on, the matched pair constructed in \equref{deduc} will
be called the \emph{canonical matched pair} associated to the
factorization of $E$ through $A$ and $H$. The following is just
the formal dual of the notion of central map:

\begin{definition}\delabel{lazycocdef}
Let $A$ and $H$ be two Hopf algebras. A coalgebra map $r: H
\rightarrow A$ is called \emph{cocentral} if the following
compatibility holds for all $h\in H$:
\begin{equation}\eqlabel{0aa}
r(h_{(1)}) \ot h_{(2)} = r(h_{(2)}) \ot h_{(1)}
\end{equation}
\end{definition}

The set $CoZ(H, A)$ of all cocentral maps is a group with respect
to the convolution product. We denote by $CoZ^{1}(H, A)$ the
subgroup of $CoZ(H, A)$ of all cocentral maps $r : H \to A$ such
that $r(1_H) = 1_A$. Cocentral maps arise naturally from the
following:

\begin{lemma}\lelabel{lemancoc}
Let $A$ and $H$ be two Hopf algebras. Then there exists a one to
one correspondence between the set of all right $H$-comodule
coalgebra maps $f: H \to A \ot H$ and the set of all cocentral
maps $r: H \to A$. The bijection $(f \mapsto r_f, \, \, r \mapsto
f_r)$ is given as follows:
\begin{equation}\eqlabel{noucor1}
r_f := ( {\rm Id}_A \ot \varepsilon_H) \circ f, \quad f_r (h) =
r(h_{(1)}) \ot h_{(2)}
\end{equation}
for all $h\in H$.
\end{lemma}

\begin{proof}
Straightforward: here $H$ and $A\ot H$ are viewed as right
$H$-comodules via $\Delta_H$.
\end{proof}

For future use we state here the following:

\begin{lemma}\lelabel{bialg-H}
If a Hopf algebra $E$ factorizes into two sub-bialgebras $A$ and
$H$, then $A$ and $H$ are necessarily Hopf algebras.
\end{lemma}

\begin{proof}
If we denote by $S_{E}$ the antipode of $E$, then the antipodes of
$A$ and $H$ are given by the following formulae:
$$
S_A (a) := ({\rm Id}_{A} \ot \epsilon_H) \circ S_E (a \bowtie
1_H), \quad S_H (h) := (\epsilon_A \ot {\rm Id}_{H}) \circ S_E
(1_A \bowtie h)
$$
for all $a \in A$, $h \in H$. Indeed, as $S_{E}$ is the antipode
for $E$ we have:
\begin{eqnarray*}
(a_{(1)} \bowtie 1_{H}) S_{E}(a_{(2)} \bowtie 1_{H}) =
S_{E}(a_{(1)} \bowtie 1_{H}) (a_{(2)} \bowtie 1_{H}) =
\varepsilon_{A}(a) (1_A \bowtie 1_H)
\end{eqnarray*}
for all $a \in A$. By applying the algebra map ${\rm Id}_{A} \ot
\epsilon_H$ to the above identity we obtain that $S_A$ is the
antipode of $A$. The formula for $S_H$ follows in the same manner.
\end{proof}

\section{Classifying complements for Hopf algebras}\selabel{3aplicatii}

Let $A \subseteq E$ be a Hopf subalgebra of $E$ and ${\mathcal F}
(A, E)$ the (possibly empty) isomorphism classes of all
$A$-complements of $E$. The problem of existence of
$A$-complements of $E$ has to be treated 'case by case' for every
given Hopf algebra extension, a computational part of it can not
be avoided. This was the approach used in the similar problem at
the level of groups, i.e. corresponding to the Hopf algebra
extension $k[A] \subseteq k[G]$, for two groups $A$ and $G$ with
$A \leq G$ (see \cite{LPS2} and the references therein). For
example, if $E = k [A_6]$ and $A$ is a proper Hopf subalgebra,
then ${\mathcal F} (A, k [A_6])$ is the empty set. This is based
on the fact that the alternating group $A_6$ has no proper
factorizations \cite{WW}.

In order to answer the (CCP) we need to introduce a few more
concepts:

\begin{definition}\delabel{rigid}
Let $A$ be a Hopf subalgebra of $E$. We define the
\emph{factorization index} of $A$ in $E$ as the cardinal of
${\mathcal F} (A, E)$ and it will be denoted by $[E : A]^f = |\,
{\mathcal F} (A, E) \,|$. The extension $A \subseteq E$ is called
\emph{rigid} if $[E : A]^f = 1$.
\end{definition}

We write down explicitly what a rigid extension of Hopf algebras
$E/A$ means: $[E : A]^f = 1$ if and only if any two
$A$-complements $H$ and $H'$ of $E$ are isomorphic as Hopf
algebras. Equivalently, this can be restated as follows: if $ E
\cong A\bowtie H \cong A \bowtie' H'$ (isomorphism of Hopf
algebras and left $A$-modules), then $H \cong H'$. This is a
Krull-Schmidt-Azumaya type theorem for bicrossed products of Hopf
algebras.

\begin{examples} \exlabel{formegrupuri}
1.  In most cases, for a given extension of Hopf algebras $A
\subseteq E$ the factorization index $[E : A]^f$ is equal to $0$
(i.e. there exists no $A$-complements of $E$) or $1$. For
instance, above we have shown in fact that $[k[A_6] : A]^f = 0$,
for any proper Hopf subalgebra $A$ of the group Hopf algebra
$k[A_6]$.

2. Let $E := A \# H$ be a semidirect product of two Hopf algebras
in the sense of \exref{exempleban}. Then the extension $A \subset
A\# H$ is rigid. Indeed, since $A$ is a normal Hopf subalgebra of
$E$, we obtain that any $A$-complement of $E$ is isomorphic to
$E/A^{+} E$.

3. Examples of extensions $E/A$ for which $[E : A]^f \geq 2$ are
quite rare, which makes them tempting to identify. For instance,
the extension $k[S_{3}] \subseteq k[S_{4}]$ has factorization
index $2$. We shall provide an elaborated way of constructing
examples of Hopf algebra extensions $E/A$ of a given factorization
index in \thref{deform100}.
\end{examples}

\begin{definition}\delabel{amisur}
Let $(A, H, \triangleright, \triangleleft)$ be a matched pair of
Hopf algebras. A unitary cocentral map $r \in CoZ^{1}(H, A)$ is
called a \emph{deformation map} of the matched pair $(A, H,
\triangleright, \triangleleft)$ if the following compatibility
holds for any $g$, $h\in H$:
\begin{equation} \eqlabel{compdef}
r\Bigl( \bigl( h \triangleleft r(g_{(1)})\bigl) \, g_{(2)} \, \Bigl) =
r(h_{(1)}) \, \bigl( h_{(2)} \triangleright r(g)\bigl)
\end{equation}
\end{definition}

Let ${\mathcal D}{\mathcal M} \, (H, A \, | \, (\triangleright,
\triangleleft) ) \subseteq CoZ^{1} (H, A)$ be the set of all
deformation maps of the matched pair $(A, H, \triangleright,
\triangleleft)$. The trivial map $r : H \to A$, $r (h) =
\varepsilon(h) 1_A$ is of course a deformation map. We introduce
the following:

\begin{definition}\delabel{amisur2}
Let $(A, H, \triangleright, \triangleleft)$ be a matched pair of
Hopf algebras. Two deformation maps $r$, $R: H \to A$ are called
\emph{equivalent} and we denote this by $r \sim R$ if there exists
$\sigma: H \to H$ an unitary automorphism of the coalgebra $H$
such that
\begin{equation}\eqlabel{echivamit}
\sigma \bigl( (h \triangleleft r(g_{(1)})) \, g_{(2)} \bigl) = \bigl(
\sigma (h) \triangleleft R (\sigma (g_{(1)}) )  \bigl) \, \sigma (g_{(2)})
\end{equation}
for all $g$, $h\in H$.
\end{definition}

The theorem that gives the answer to the (CCP) for Hopf algebras
is the following:

\begin{theorem}\textbf{(Classification of complements)}\thlabel{clasformelor}
Let $A$ be a Hopf subalgebra of $E$, $H$ an $A$-complement of $E$
and $(A, H, \triangleright, \triangleleft)$ the associated
canonical matched pair. Then:

$(1)$ $\sim$ is an equivalence relation on ${\mathcal D}{\mathcal
M} \, (H, A \, | \, (\triangleright, \triangleleft) )$. We denote by  ${\mathcal
H}{\mathcal A}^{2} (H, A \, | \, (\triangleright, \triangleleft) )$ the quotient set
${\mathcal D}{\mathcal M} \, (H, A \, | \, (\triangleright, \triangleleft) )/ \sim$.

$(2)$ There exists a bijection between the isomorphism classes of
all $A$-complements of $E$ and ${\mathcal H}{\mathcal A}^{2} (H, A
\, | \, (\triangleright, \triangleleft) )$. In particular, the
factorization index of $A$ in $E$ is computed by the formula:
$$
[E : A]^f = | {\mathcal H}{\mathcal A}^{2} (H, A \, | \, (\triangleright,
\triangleleft) )|
$$
\end{theorem}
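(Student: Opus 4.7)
The key observation is that the compatibility \eqref{echivamit} can be rewritten as
\[
\sigma(h \bullet_r g) \;=\; \sigma(h) \bullet_R \sigma(g),
\]
where $\bullet_r$ and $\bullet_R$ denote the deformed multiplications on $H_r$ and $H_R$ from \thref{deformatANM}. Since $H_r$ and $H_R$ both coincide with $H$ as coalgebras and share the unit $1_H$ (using $r(1_H) = R(1_H) = 1_A$), this identifies $r \sim R$ with the assertion that $\sigma$ is a Hopf algebra isomorphism $H_r \to H_R$ whose underlying map is a unitary coalgebra automorphism of $H$. Antipode preservation is automatic for any bialgebra isomorphism between Hopf algebras, so no hidden condition is overlooked.

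\textbf{Part (1).} Via this dictionary the equivalence-relation axioms become transparent. Reflexivity is witnessed by $\sigma = \mathrm{Id}_H$, since both sides of \eqref{echivamit} then collapse to $(h \triangleleft r(g_{(1)})) g_{(2)}$. For symmetry, if $r \sim R$ via $\sigma$ -- that is, $\sigma : H_r \to H_R$ is a Hopf algebra isomorphism -- then its inverse $\sigma^{-1} : H_R \to H_r$ is again a Hopf algebra isomorphism, hence a unitary coalgebra automorphism of $H$, and its multiplicativity yields $R \sim r$ via $\sigma^{-1}$. Transitivity follows by composition: if $r \sim R$ via $\sigma$ and $R \sim R'$ via $\tau$, then $\tau \circ \sigma : H_r \to H_{R'}$ is a Hopf algebra isomorphism and thus witnesses $r \sim R'$.

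\textbf{Part (2).} Define
\[
\Phi : {\mathcal D}{\mathcal M}(H, A \,|\, (\triangleright, \triangleleft)) \longrightarrow {\mathcal F}(A, E), \qquad \Phi(r) = [H_r];
\]
this is well-defined by \thref{invarianceunder}, which ensures that each $H_r$ is an $A$-complement of $A \bowtie H \cong E$. The dictionary above shows that $\Phi(r) = \Phi(R)$ if and only if $r \sim R$, so $\Phi$ descends to an injection from the quotient ${\mathcal H}{\mathcal A}^{2}(H, A \,|\, (\triangleright, \triangleleft))$ into ${\mathcal F}(A, E)$. Surjectivity of the induced map is precisely the content of \thref{descformelor}: every $A$-complement of $E$ is Hopf-algebra isomorphic to $H_r$ for some deformation map $r$ of the canonical matched pair. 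Passing to cardinalities then yields the formula $[E : A]^f = |{\mathcal H}{\mathcal A}^{2}(H, A \,|\, (\triangleright, \triangleleft))|$ in the sense of \deref{rigid}.

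\textbf{Main obstacle.} The subtle step is the reverse direction of the dictionary: given an abstract Hopf algebra isomorphism $\varphi : H_r \to H_R$, one must recognise $\varphi$ as a unitary coalgebra automorphism of $H$ satisfying \eqref{echivamit}. This relies on the explicit construction in \thref{deformatANM}, namely that $H_r$ and $H_R$ share both their coalgebra structure and their unit with $H$; once that is invoked, the multiplicativity of $\varphi$ translates verbatim into \eqref{echivamit}, and the remaining verifications are routine.
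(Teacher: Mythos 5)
Your proposal is correct and follows essentially the same route as the paper: both identify the relation $r\sim R$ with the existence of a unitary coalgebra automorphism of $H$ that is multiplicative $H_r\to H_R$ (hence a Hopf algebra isomorphism), deduce part (1) from this dictionary, and obtain the bijection in part (2) by combining \thref{invarianceunder} for well-definedness and injectivity with \thref{descformelor} for surjectivity. The only difference is presentational — you spell out reflexivity, symmetry and transitivity explicitly where the paper leaves them implicit.
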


We prove this theorem in two steps. First, we prove the following
result where a new type of deformation of a given Hopf algebra $H$
is introduced:

\begin{theorem} \textbf{(Deformation of complements)} \thlabel{deformatANM}
Let $A$ be a Hopf subalgebra of $E$, $H$ an $A$-complement of $E$
and $r : H \to A$ a deformation map of the associated canonical
matched pair $(A, H, \triangleright, \triangleleft)$.

$(1)$ Let $f_{r}:H \to A \ot H$ be the coalgebra map defined for
any $h \in H$ by:
$$
f_{r}(h) = r(h_{(1)}) \ot h_{(2)}
$$
Then ${\mathbb H}: = {\rm Im}(f_{r})$ is an $A$-complement of $E
\cong A \bowtie H$.

$(2)$ Let $H_{r} := H$, as a coalgebra, with the new
multiplication $\bullet$ on $H$ defined for any $h$, $g\in H$ as
follows:
\begin{equation}\eqlabel{defoinmult}
h \, \bullet \, g  := \bigl( h \triangleleft r(g_{(1)})\bigl) \,
g_{(2)}
\end{equation}
Then $H_{r} = (H_{r}, \bullet, 1_H, \Delta_H, \varepsilon_H)$ is a
Hopf algebra with the antipode given by
\begin{equation} \eqlabel{antipdef}
S: H_{r} \to H_{r}, \qquad S (h) := S_{H} (h_{(2)}) \triangleleft
(S_{A} \circ r)(h_{(1)})
\end{equation}
for all $h \in H$, called the $r$-deformation of $H$. Furthermore,
$H_r \cong {\mathbb H}$, as Hopf algebras.
\end{theorem}

\begin{proof} $(1)$ Without loss of generality, we can identify
$E = A \bowtie H$, since the multiplication map $m_E: A \bowtie H
\to E$ is a left $A$-linear Hopf algebra isomorphism. It follows
from \leref{lemancoc} that $f_r: H \to A \ot H$ is a unit
preserving injective coalgebra map. Thus, ${\mathbb H} = {\rm
Im}(f_{r})$ is a subcoalgebra of $E = A \bowtie H$. We will denote
by $\widetilde{f_{r}} : H \to {\mathbb H}$ the coalgebra
isomorphism induced by $f_{r}$. We shall prove that ${\mathbb H}$
is a sub-bialgebra of $E = A \bowtie H$ and moreover, $E$
factorizes trough $A$ and ${\mathbb H}$. Indeed, using
\equref{compdef} it follows that ${\mathbb H}$ is also a
subalgebra of $E$ since for any $h$, $g \in H$ we have:
\begin{eqnarray*}
\bigl(r(h_{(1)}) \bowtie h_{(2)}\bigl)\bigl(r(g_{(1)}) \bowtie
g_{(2)}\bigl) &=& \underline{r(h_{(1)}) \bigl(h_{(2)} \rhd
r(g_{(1)})\bigl)} \bowtie \bigl(h_{(3)} \lhd r(g_{(2)})\bigl)
g_{(3)}\\
&\stackrel{\equref{compdef}} {=}&  r \Bigl(\bigl(h_{(1)} \lhd
r(g_{(1)})\bigl) g_{(2)}\Bigl) \bowtie \bigl(h_{(2)} \lhd
r(g_{(3)})\bigl) g_{(4)} \\
&\stackrel{\equref{0aa}} {=}& f_r \bigl( (h \triangleleft
r(g_{(1)}) g_{(2)} \bigl) \in {\mathbb H}
\end{eqnarray*}
Therefore, ${\mathbb H}$ is a sub-bialgebra of $E$. Consider now
the left $A$-linearization of $f_r$, i.e. the map $\varphi: A \ot
H \to A \ot H$ given by $ \varphi(a \ot h) = a f_{r}(h)$, for all
$a \in A$ and $h \in H$. Then $\varphi$ is a bijection with the
inverse given by: $\varphi^{-1} \bigl(a \otimes h) = a S_{A}
\bigl(r(h_{(1)})\bigl) \ot h_{(2)}$. Since $\varphi$ decomposes as
$\varphi = m \circ ({\rm Id}_{A} \ot \widetilde{f_{r}})$, where
$m: A \ot {\mathbb H} \to E$ is the multiplication map, it follows
that $m = \varphi \circ ({\rm Id}_{A} \ot
\widetilde{f_{r}}^{-1})$. Thus the multiplication map $m: A \ot
{\mathbb H} \to E$ is bijective and hence $E$ factorizes through
$A$ and ${\mathbb H}$. Moreover, ${\mathbb H}$ is necessarily a
Hopf subalgebra of $E$ by \leref{bialg-H} and hence ${\mathbb H}$
is an $A$-complement of $E = A \bowtie H$, as needed.

$(2)$ $\widetilde{f_{r}} : H \to {\mathbb H}$ is a unit preserving
coalgebra isomorphism. Moreover, in the proof of part $(1)$ we
obtained that $\widetilde{f_{r}} (h) \widetilde{f_{r}} (h)  =
\widetilde{f_{r}} (h \bullet g)$, where $\bullet$ is the
multiplication on $H$ given by \equref{defoinmult}. Therefore,
$\widetilde{f_{r}} : H_{r} \to {\mathbb H}$ is a bialgebra
isomorphism between the Hopf algebra ${\mathbb H}$ and $H_r$.
Thus, $H_r$ is a Hopf algebra with the antipode given by
\equref{antipdef}.
\end{proof}

\begin{remarks} \relabel{cudeformar}
1. Assume that in \thref{deformatANM} the unitary cocentral map
$r: H \to A$ is the trivial one $r (h) = \varepsilon_H (h) 1_A$ or
the right action $\triangleleft : H\ot A \to H $ of $A$ on $H$ is
the trivial action, i.e. $h \triangleleft a = \varepsilon_A (a)
h$, for all $h\in H$ and $a\in A$. Then $H_{r} = H$ as Hopf
algebras. In general, the new Hopf algebra $H_{r}$ may not be
isomorphic to $H$ as a Hopf algebra: an example will be provided
in \thref{findmare}.

2. At this point we should notice that there are two other
deformations of a given Hopf algebra in the literature. The first
one was introduced by Drinfel'd \cite{dr}: the comultiplication of
a Hopf algebra $H$ is deformed using an invertible element $R\in
H\ot H$, called twist, in order to obtain a new Hopf algebra
$H^R$. The dual case was defined by Doi \cite{Doi}: the algebra
structure of a Hopf algebra $H$ was deformed using a Sweedler
cocycle $\tau : H \ot H \to k$ as follows: let $H_{\tau} = H$, as
a coalgebra, with the new multiplication given by
$$
h \cdot g := \tau (h_{(1)}, g_{(1)}) \, h_{(2)} g_{(2)} \,
\tau^{-1} (h_{(3)}, g_{(3)})
$$
for all $h$, $g\in H$. Then $H_{\tau}$ is a new Hopf algebra
\cite[Theorem 1.6]{Doi} and among several applications we mention
that the Drinfel'd double $D(H)$ is a special case of this
deformation \cite{DT2}.
\end{remarks}

Next we shall prove the converse of \thref{deformatANM}.

\begin{theorem}\textbf{(Description of complements)}\thlabel{descformelor}
Let $A$ be a Hopf subalgebra of $E$, $H$ an $A$-complement of $E$
with the associated canonical matched pair $(A, H, \triangleright,
\triangleleft)$ and let ${\mathbb H}$ be an arbitrary
$A$-complement of $E$. Then there exists an isomorphism of Hopf
algebras $ {\mathbb H} \cong H_{r}$, for some deformation map $r :
H \to A$ of the matched pair $(A, H, \triangleright,
\triangleleft)$.
\end{theorem}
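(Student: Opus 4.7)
The plan is to extract the desired deformation map directly from the unique left $A$-linear Hopf algebra isomorphism between the two bicrossed-product realizations of $E$. By \thref{carMaj} applied to each of the complements $H$ and $\mathbb{H}$, there exist canonical matched pairs $(A, H, \triangleleft, \triangleright)$ and $(A, \mathbb{H}, \triangleleft', \triangleright')$ together with left $A$-linear Hopf algebra isomorphisms $m_E \colon A \bowtie H \to E$ and $m_E' \colon A \bowtie' \mathbb{H} \to E$ which stabilize $A$. Composing, $\varphi := m_E^{-1} \circ m_E' \colon A \bowtie' \mathbb{H} \to A \bowtie H$ is a left $A$-linear Hopf algebra isomorphism sending $a \bowtie' 1_{\mathbb{H}}$ to $a \bowtie 1_H$ for every $a \in A$. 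The strategy is to produce from $\varphi$ a coalgebra isomorphism $u \colon \mathbb{H} \to H$ and a map $r \colon H \to A$, and then verify that $r$ is a deformation map of $(A, H, \triangleright, \triangleleft)$ and that $u$ promotes to a Hopf algebra isomorphism $\mathbb{H} \cong H_r$.

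First I would define $\overline{r} \colon \mathbb{H} \to A$ and $u \colon \mathbb{H} \to H$ as the two components of $\varphi|_{1_A \bowtie' \mathbb{H}}$ extracted via the counits:
$$
\overline{r}(h) := (\mathrm{id}_A \otimes \varepsilon_H)(\varphi(1_A \bowtie' h)), \qquad u(h) := (\varepsilon_A \otimes \mathrm{id}_H)(\varphi(1_A \bowtie' h)).
$$
Both are unitary coalgebra maps, and the standard coalgebra-map trick yields the normal form $\varphi(1_A \bowtie' h) = \overline{r}(h_{(1)}) \bowtie u(h_{(2)})$. Performing the same construction on $\varphi^{-1}$ produces a coalgebra map $u' \colon H \to \mathbb{H}$, and comparing the identities $\varphi \circ \varphi^{-1} = \mathrm{id}$ and $\varphi^{-1} \circ \varphi = \mathrm{id}$ (via left $A$-linearity) shows $u \circ u' = \mathrm{id}_H$ and $u' \circ u = \mathrm{id}_{\mathbb{H}}$. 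Hence $u$ is a coalgebra isomorphism and I set $r := \overline{r} \circ u^{-1} \colon H \to A$ as the candidate deformation map.

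The required properties of $r$ follow from $\varphi$ being a bialgebra morphism. For cocentrality, I compute $\Delta_{A \bowtie H}(\varphi(1_A \bowtie' h))$ in two ways, once by applying $\Delta$ directly to $\overline{r}(h_{(1)}) \bowtie u(h_{(2)})$ and once as $(\varphi \otimes \varphi) \Delta_{A \bowtie' \mathbb{H}}(1_A \bowtie' h)$, equate the results and apply $\varepsilon_A \otimes \mathrm{id}_H \otimes \mathrm{id}_A \otimes \varepsilon_H$ to obtain $\overline{r}(h_{(1)}) \otimes u(h_{(2)}) = \overline{r}(h_{(2)}) \otimes u(h_{(1)})$, which is exactly \equref{0aa} for $r = \overline{r} \circ u^{-1}$. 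For the deformation identity \equref{compdef}, I would expand the multiplicativity relation $\varphi((1 \bowtie' h)(1 \bowtie' g)) = \varphi(1 \bowtie' h) \cdot \varphi(1 \bowtie' g)$ using the bicrossed-product law \equref{0010} in $A \bowtie H$, and apply separately $\mathrm{id}_A \otimes \varepsilon_H$ and $\varepsilon_A \otimes \mathrm{id}_H$; this yields
$$
\overline{r}(hg) = \overline{r}(h_{(1)}) (u(h_{(2)}) \triangleright \overline{r}(g)), \qquad u(hg) = (u(h) \triangleleft \overline{r}(g_{(1)}))\, u(g_{(2)})
$$
for all $h, g \in \mathbb{H}$. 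The second identity, applied to $u^{-1}(h), u^{-1}(g) \in \mathbb{H}$, rewrites the argument of the left hand side of \equref{compdef} as $u(u^{-1}(h) \cdot u^{-1}(g))$; applying $r$ then gives $\overline{r}(u^{-1}(h) \cdot u^{-1}(g))$, which by the first identity equals $r(h_{(1)}) (h_{(2)} \triangleright r(g))$. Hence $r \in \mathcal{D}\mathcal{M}(H, A \,|\, (\triangleright, \triangleleft))$.

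Finally, the second of the two displayed identities asserts precisely that $u \colon \mathbb{H} \to H_r$ is a morphism of algebras for the multiplication $\bullet$ of \equref{defoinmult}; since $u$ is already a unitary coalgebra isomorphism, it is an isomorphism of Hopf algebras $\mathbb{H} \cong H_r$. I expect the main technical obstacle to be the Sweedler-index bookkeeping involved in expanding $\varphi(1 \bowtie' h) \cdot \varphi(1 \bowtie' g)$ inside $A \bowtie H$ and separating the contributions of the four maps $\overline{r}, u, \triangleright, \triangleleft$; once this is untangled, the coalgebra-map properties of the pieces together with the cocentrality of $r$ suffice to reassemble the expression into the two compatibilities above.
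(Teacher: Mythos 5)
Your proposal is correct and takes essentially the same route as the paper: the paper likewise forms $\psi = m_E^{-1}\circ m_E'$, writes it as $\psi(a\bowtie' h') = a\,\overline{r}(h'_{(1)})\bowtie \overline{v}(h'_{(2)})$ for a unitary cocentral map $\overline{r}$ and a unitary coalgebra isomorphism $\overline{v}$, sets $r=\overline{r}\circ\overline{v}^{-1}$, and deduces \equref{compdef} and the algebra-map property of $\overline{v}:\mathbb{H}\to H_r$ from exactly the two identities you display. The only difference is that the paper imports the decomposition of a left $A$-linear Hopf algebra isomorphism between bicrossed products from \cite[Theorem 3.5]{abm1}, whereas you re-derive the needed normal form, cocentrality and multiplicativity identities inline from the coalgebra- and algebra-map properties of $\varphi$; both are sound.
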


\begin{proof} The multiplication map $m_E: A \bowtie H \to E$ is a left
$A$-linear Hopf algebra isomorphism and we denote its inverse by
$\nu$. In fact, without loss of generality, we can identify $E = A
\bowtie H$. We denote the multiplication map associated to the
$A$-complement ${\mathbb H}$ by $m'_E : A \ot {\mathbb H} \to E$
and its inverse by $\mu$. Define $f: H \to {\mathbb H}$ as the
composition:
\begin{equation}\eqlabel{f-ul}
f: \, H \, \stackrel{i}{\hookrightarrow} \, E \,
\stackrel{\mu}{\longrightarrow} \, A \ot {\mathbb H} \,
\stackrel{\varepsilon_A \ot {\rm Id}} {\longrightarrow} \,
{\mathbb H}
\end{equation}
Then, $f$ is a unitary coalgebra isomorphism with the inverse
$f^{-1}$ given by the composition:
\begin{equation}\eqlabel{f-ulinv}
f^{-1}: \, {\mathbb H} \, \stackrel{i}{\hookrightarrow} \, E \,
\stackrel{\nu}{\longrightarrow} \, A \ot H \,
\stackrel{\varepsilon_A \ot {\rm Id}} {\longrightarrow} \, H
\end{equation}
The proof will be finished if we construct a deformation map $r :
H \to A$ of the canonical matched pair $(A, H, \triangleright,
\triangleleft)$ such that $f: H_r \to {\mathbb H}$ is an algebra
map. This deformation map $r : H \to A$ is given by the
composition of the following maps:
\begin{equation}\eqlabel{r-ul}
r: \, H \, \stackrel{f}{\longrightarrow} \, {\mathbb H}\,
\stackrel{i}{\hookrightarrow} \, E \,
\stackrel{\nu}{\longrightarrow} \, A \ot H \, \stackrel{{\rm Id}
\ot \varepsilon_H} {\longrightarrow} \, A
\end{equation}
We shall prove this. We view ${\mathbb H}$ as a right $H$-comodule
along the coalgebra map $ f^{-1}: {\mathbb H} \to H$. Then, $f : H
\to {\mathbb H}$ is right $H$-colinear, since its inverse $f^{-1}$
is. We denote by $\tilde{f} : H \to A \ot H$ the following
composition:
$$
\tilde{f} : \, H \, \stackrel{f} {\longrightarrow} \, {\mathbb H}
\, \stackrel{i}{\hookrightarrow} \, E \, {\cong} \, A \bowtie H
$$
Then $\tilde{f} : H \to A \ot H$ is a unit-preserving right
$H$-colinear and coalgebra map. It follows from \leref{lemancoc}
that there exists a unique unit-preserving cocentral map $r: H \to
A$ such that $\tilde{f} (h) = r(h_{(1)}) \ot h_{(2)}$, for all
$h\in H$ and moreover the map $r$ is given by \equref{r-ul}. Now,
$ {\rm Im} ( \tilde{f} ) = {\rm Im} (f) = {\mathbb H} $ is a Hopf
subalgebra of $E = A \bowtie H$, since ${\mathbb H}$ is an
$A$-complement. Thus, for any $h$, $g \in H$ we have that $
\tilde{f} (h) \tilde{f} (g) \in {\rm Im} (\tilde{f})$. Now, we
have:
$$
\tilde{f} (h) \tilde{f} (g) = r (h_{(1)}) \bigl( h_{(2)}
\triangleright r (g_{(1)}) \bigl) \, \bowtie \, \bigl( h_{(3)}
\triangleleft  r (g_{(2)}) \bigl) g_{(3)}
$$
This element is of the form $ \tilde{f} (x) = r(x_{(1)}) \ot
x_{(2)}$, for some $x\in H$ if and only if $x = \bigl( h
\triangleleft  r (g_{(1)}) \bigl) g_{(2)}$ and $r$ is a descent
map of the canonical matched pair $(A, H, \triangleright,
\triangleleft)$. Indeed, if we apply $\varepsilon_A \ot {\rm
Id}_H$ to the identity $\tilde{f} (h) \tilde{f} (g) = \tilde{f}
(x)$, we obtain the above formula for $x$ while by applying ${\rm
Id}_A \ot \varepsilon_H$ to the formula $\tilde{f} (h) \tilde{f}
(g) = \tilde{f} (\bigl( h \triangleleft  r (g_{(1)}) \bigl)
g_{(2)})$ we obtain that $r: H \to A$ is a deformation map.
Furthermore, in this case we have $f (h) f (g) = f \bigl ( \bigl(
h \triangleleft r (g_{(1)}) \bigl) g_{(2)}  \bigl) = f (h \bullet
g)$, that is $f : H_r \to {\mathbb H}$ is an algebra map, as
needed.
\end{proof}

We are now ready to prove \thref{clasformelor}:

\begin{proof}[The proof of \thref{clasformelor}]
It follows from \thref{descformelor} that in order to classify all
$A$-complements of $E$ we can consider only $r$-deformations of
$H$, for various deformation maps $r : H \to A$. Let $r$, $R : H
\to A$ be two deformation maps. As the coalgebra structure on
$H_r$ and $H_R$ coincide with the one of $H$, we obtain that the
Hopf algebras $H_r$ and $H_R$ are isomorphic if and only if there
exists $\sigma : H \to H$ a unitary coalgebra isomorphism such
that $\sigma : H_r \to H_R$ is also an algebra map. Taking into
account the definition of the multiplication on $H_r$ given by
\equref{defoinmult} we obtain that $\sigma$ is an algebra map if
and only if the compatibility condition \equref{echivamit} of
\deref{amisur2} holds, i.e. $r \sim R$. Hence, $r \sim R$ if and
only if $\sigma: H_r \to H_R$ is an isomorphism of Hopf algebras.
Thus we obtain that $\sim$ is an equivalence relation on
${\mathcal D}{\mathcal M} (H, A \, | \, (\triangleright,
\triangleleft) )$ and the map
$$
{\mathcal H}{\mathcal A}^{2} (H, A \, | \, (\triangleright,
\triangleleft) ) \to {\mathcal F} (A, E),  \qquad \overline{r}
\mapsto H_{r}
$$
where $\overline{r}$ is the equivalence class of $r$ via the
relation $\sim$, is well defined and a bijection between sets.
This finishes the proof.
\end{proof}

\section{Examples}\selabel{secexemple}
In this section we shall provide an example of a Hopf algebra
extension $A \subseteq E$ whose factorization index is arbitrary
large. For a positive integer $n$ we denote by $U_n (k) = \{
\omega \in k \, | \, \omega^n = 1 \}$ the cyclic group of $n$-th
roots of unity in $k$ and by $\nu (n) = |U_n (k)|$ the order of
$U_n (k)$. $C_n$ will be the cyclic group of order $n$ generated
by $c$ or $d$ (if we consider two copies of $C_n$) and $k$ will be
a field of characteristic $\neq 2$. Let $A := H_{4}$ be the
Sweedler's $4$-dimensional Hopf algebra generated by $g$ and $x$
subject to the relations:
$$
g^{2} = 1, \quad x^{2} = 0, \quad x g = -g x
$$
with the coalgebra structure given such that $g$ is a group-like
element and $x$ is $(1, g)$-primitive. \cite[Proposition
4.3]{abm1} proves that there exists a bijective correspondence
between the set of all matched pairs $(H_4, k[C_n], \triangleleft,
\triangleright)$ and the group $U_n (k)$ such that the matched
pair $(H_4, k[C_n], \triangleleft, \triangleright)$ associated to
$\omega \in U_n (k)$ is given as follows: $\triangleleft: k[C_{n}]
\ot H_{4} \to k[C_{n}]$ is the trivial action and $\triangleright:
k[C_{n}] \ot H_{4} \to H_{4}$ is defined by:
\begin{equation}\eqlabel{bicrossedH4n}
c^{i} \triangleright g = g, \quad c^{i} \triangleright x = \omega^{i} x, \quad c^{i}
\triangleright gx = \omega^{i} \, gx
\end{equation}
for all $i = 0, 1, \cdots, n-1$. We denote by $H_{4n, \, \omega}$
the bicrossed product $H_4 \bowtie k[C_n]$ associated to this
matched pair: $H_{4n, \, \omega}$ is the $4n$-dimensional quantum
group generated by $g$, $x$ and $c$ subject to the relations:
$$
g^{2} = c^n = 1, \quad x^{2} = 0, \quad x g = -g x, \quad c g = g
c, \quad c x = \omega \, x c
$$
with the coalgebra structure given such that $g$ and $c$ are
group-like elements and $x$ is $(1, g)$-primitive. A $k$-basis in
$H_{4n, \, \omega}$ is given by $\{ c^i, \, g c^i, \, x c^i, g
xc^i \, | \, i = 0, \cdots, n-1 \}$.

Let $\xi $ be a generator of the group $U_n(k)$. In what follows
we will construct a family of matched pairs of Hopf algebras
$(k[C_{n}], \, H_{4n, \, \xi^{t}}, \, \triangleleft^{l} ,\,
\triangleright)$ such that the Hopf algebra $H_{4n, \, \xi^{t -
lp}}$ will appear as an $r$-deformation of $H_{4n, \, \xi^{t}}$.

\begin{theorem}\thlabel{deform100}
Let $k$ be a field of characteristic $\neq 2$, $n$ a positive
integer, $\xi$ a generator of $U_n(k)$, $t \in \{0, 1, ..., \nu(n)
- 1\}$ and $C_n = \lan d ~|~ d^{n} = 1 \ran $ the cyclic group of
order $n$. Then:

$(1)$ For any $l \in \{0, 1, \cdots, \nu(n) - 1\}$ there exists a
matched pair $(k[C_{n}], \, H_{4n, \,  \xi^{t}}, \,
\triangleleft^{l} ,\, \triangleright)$, where $\triangleright :
H_{4n, \,  \xi^{t}} \, \ot \, k[C_n] \to k[C_n]$ is the trivial
action and the right action $\triangleleft^{l}: H_{4n, \, \xi^{t}}
\, \ot \, k[C_n]  \to H_{4n, \,  \xi^{t}}$ is given for any $i$,
$k = 0, 1, ..., n-1$ by:
\begin{equation}\eqlabel{mpair100}
c^i \triangleleft^l d^k = c^i, \quad (gc^i) \triangleleft^l d^k = gc^i, \quad (xc^i)
\triangleleft^l d^k = \xi^{lk} \, x c^i, \quad  (gxc^i)\triangleleft^l d^k =
\xi^{lk} \, gx c^i
\end{equation}

$(2)$ The deformation maps associated to the matched pair
$(k[C_{n}], \, H_{4n, \,  \xi^{t}}, \, \triangleleft^{l} ,\,
\triangleright)$ are the algebra maps defined for any $p \in \{0,
1, \cdots, n - 1\}$ as follows:
$$
r_{p}: H_{4n, \, \xi^{t}} \to k[C_{n}], \quad r_{p}(g) = 1, \quad
r_{p}(c) = d^{p}, \quad r_{p}(x) = 0
$$
Furthermore, the $r_p$-deformation of $H_{4n, \, \xi^{t}}$ is
$H_{4n, \, \xi^{t - lp}}$, i.e. $(H_{4n, \, \xi^{t}})_{r_{p}} =
H_{4n, \, \xi^{t - lp}}$.
\end{theorem}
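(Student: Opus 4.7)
The plan is to verify (1) by a direct application of \thref{bicrsreconst} to the pair of actions $(\triangleright, \triangleleft^{l})$, and to handle (2) in three substeps: show each $r_{p}$ is a deformation map, show that these exhaust all deformation maps, and compute the deformed Hopf algebra explicitly.

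For part (1), since $\triangleright$ is trivial the compatibility \equref{mp2} degenerates to a tautology and \equref{mp4} holds automatically because $k[C_{n}]$ is cocommutative. What remains is to check that $\triangleleft^{l}$ is a coalgebra map, that $(H_{4n, \xi^{t}}, \triangleleft^{l})$ is a right $k[C_{n}]$-module, and that \equref{mp3} collapses to the module-algebra identity $(uv) \triangleleft^{l} d^{k} = (u \triangleleft^{l} d^{k})(v \triangleleft^{l} d^{k})$. All three reduce to routine checks on the basis $\{c^{i}, gc^{i}, xc^{i}, gxc^{i}\}$: the operator $(-) \triangleleft^{l} d^{k}$ is diagonal, fixing group-likes and scaling $xc^{i}, gxc^{i}$ by $\xi^{lk}$, and this scaling factor is additive on $x$-degrees, hence multiplicative on products.

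For part (2)(a), first I would confirm that each $r_{p}$ extends to a well-defined algebra map by checking that the defining relations of $H_{4n, \xi^{t}}$ are sent to identities in $k[C_{n}]$; the only subtle one, $cx = \xi^{t} xc$, is tautological because $r_{p}(x) = 0$. Cocentrality is clear on group-likes, and on $h = x$ it reduces to the identity $r_{p}(x) \otimes 1 + r_{p}(g) \otimes x = 1 \otimes x = r_{p}(1) \otimes x + r_{p}(x) \otimes g$. For the compatibility \equref{compdef}, triviality of $\triangleright$ collapses the right-hand side to $r_{p}(u) r_{p}(v)$; using that $r_{p}$ is an algebra map together with the basis-level identity $r_{p}(u \triangleleft^{l} a) = \varepsilon_{k[C_{n}]}(a) r_{p}(u)$ (both sides vanish when $u$ contains $x$; otherwise $u$ is fixed by $\triangleleft^{l}$) and $\varepsilon_{k[C_{n}]} \circ r_{p} = \varepsilon_{H_{4n, \xi^{t}}}$, the left-hand side collapses to $r_{p}(u) r_{p}(v)$ as well.

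For (2)(b), I would systematically exploit cocentrality on the basis of $H_{4n, \xi^{t}}$: applied to $h = x$ it forces $r(g) = 1$ and $r(x) = 0$, while on $h = xc^{i}$ and $h = gxc^{i}$ it forces $r(xc^{i}) = r(gxc^{i}) = 0$ together with $r(gc^{i}) = r(c^{i})$; the coalgebra-map axiom then forces each $r(c^{i})$ to be either zero or a group-like of $k[C_{n}]$. Specializing \equref{compdef} to $h = c^{i}$, $v = c^{j}$ gives the semigroup identity $r(c^{i+j}) = r(c^{i}) r(c^{j})$; since $r(c^{n}) = r(1) = 1 \neq 0$, necessarily $r(c) = d^{p}$ for some $p \in \{0, 1, \ldots, n-1\}$, so $r = r_{p}$. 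The identification $(H_{4n, \xi^{t}})_{r_{p}} = H_{4n, \xi^{t - lp}}$ then follows from computing $u \bullet v = (u \triangleleft^{l} r_{p}(v_{(1)})) v_{(2)}$ on generators: the relations $g^{2} = 1$, $c^{n} = 1$, $x^{2} = 0$, $xg = -gx$ and $cg = gc$ all survive, while $c \bullet x = cx = \xi^{t} xc$ and $x \bullet c = (x \triangleleft^{l} d^{p}) c = \xi^{lp} xc$ give $c \bullet x = \xi^{t - lp} (x \bullet c)$, which is precisely the defining relation of $H_{4n, \xi^{t - lp}}$; the coalgebra structure is untouched by the deformation. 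The main anticipated obstacle is the classification step (2)(b): extracting constraints on $r$ from cocentrality on $xc^{i}$ and $gxc^{i}$ requires careful coproduct bookkeeping, though each case ultimately reduces to a linear-independence argument in $k[C_{n}] \otimes H_{4n, \xi^{t}}$.
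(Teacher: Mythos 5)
Your proposal is correct and follows essentially the same route as the paper: part (1) via \thref{bicrsreconst} with the trivial left action collapsing \equref{mp2} and \equref{mp4}, part (2) by extracting $r(x)=r(xc^i)=r(gxc^i)=0$, $r(gc^i)=r(c^i)$ and $r(c)=d^p$ from cocentrality and the grouplike condition, then verifying \equref{compdef} and computing $c\bullet x=\xi^{t-lp}\,x\bullet c$ on generators. Your uniform verification of \equref{compdef} via $r_p(u\triangleleft^l a)=\varepsilon(a)r_p(u)$ is a slightly cleaner packaging of the paper's case-by-case check, but it is not a different argument.
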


\begin{proof}
$(1)$ The compatibility condition \equref{mp4} is trivially
fulfilled since $\triangleright$ is the trivial action and
$k[C_{n}]$ is cocommutative. Moreover, \equref{mp3} becomes:
\begin{equation}\eqlabel{defo1}
(yz) \triangleleft^{l} a = (y \triangleleft^{l} a_{(1)})(z \triangleleft^{l} a_{(2)})
\end{equation}
for all $y$, $z \in H_{4n, \,  \xi^{t}}$ and $a \in
k[C_{n}]$. Since we have:
$$
c^{i} \triangleleft^{l} d^{k} = c^{i}, \qquad g \triangleleft^{l} d^{k} = g, \qquad
x \triangleleft^{l} d^{k} = \xi^{lk}x
$$
then it is straightforward to see that \equref{defo1} indeed
holds. The fact that $\triangleleft^{l}: H_{4n, \,  \xi^{t}} \ot
k[C_{n}] \to H_{4n, \,  \xi^{t}}$ is a coalgebra map is just a
routinely computation. Finally, we only need to prove that the
action $\triangleleft^{l}$ respects the relations in $k[C_{n}]$,
respectively $H_{4n, \, \xi^{t}}$. For instance, we have:
$$
xc^{i} \triangleleft^{l} d^{n} = (xc^{i} \triangleleft^{l} d^{n-1}) \triangleleft^{l} d =
\xi^{l(n-1)} xc^{i} \triangleleft^{l} d = \xi^{ln} xc^{i} = xc^{i}
$$
$$
xg \triangleleft^{l} d^{k} = (x \triangleleft^{l} d^{k})(g \triangleleft^{l} d^{k}) =
\xi^{lk}xg = - \xi^{lk}gx = -gx \triangleleft^{l} d^{k}
$$
$$
cx \triangleleft^{l} d^{k} = (c \triangleleft^{l} d^{k})(x \triangleleft^{l} d^{k}) =
\xi^{lk} cx = \xi^{lk} \xi^{t} xc = \xi^{t} xc \triangleleft^{l} d^{k}
$$
Proving that the rest of the compatibilities also hold is a
routinely check.

$(2)$ Let $r: H_{4n, \, \xi^{t}} \to k[C_{n}]$ be a deformation
map. By applying \equref{0aa} for $xc^{i}$ and $gxc^{i}$, where $i
= 0, 1, \cdots , n-1$ we obtain:
$$
r(c^{i}) \ot xc^{i} + r(xc^{i}) \ot gc^{i} = r(xc^{i}) \ot c^{i} +
r(gc^{i}) \ot xc^{i}
$$
$$
r(gc^{i}) \ot gxc^{i} + r(gxc^{i}) \ot c^{i} = r(gxc^{i}) \ot
gc^{i} + r(c^{i}) \ot gxc^{i}
$$
Hence, it follows that $r(xc^{i}) = r(gxc^{i}) = 0 $ and $r(c^{i})
= r(gc^{i})$ for all $i \in 0, 1, ..., n-1$. In particular we have
$r(g) = 1$ and $r(x) = 0$. Moreover, since $r$ is also a coalgebra
map then $r(c)$ is a grouplike element from $k[C_{n}]$. Consider
$r(c) = d^{p}$, for some $p = 0, 1, \cdots, n-1$. For the rest of
the proof we will denote this map by $r_{p}$. As $\triangleright$
is the trivial action then the compatibility condition
\equref{compdef} simplifies to:
\begin{equation}\eqlabel{compdef20}
r_{p}\Bigl( \bigl( y \triangleleft^{l} r(z_{(1)})\bigl) \, z_{(2)} \,
\Bigl) = r_{p}(y) r_{p}(z)
\end{equation}
for all $y$, $z \in H_{4n, \,\xi^{t}}$. By applying
\equref{compdef20} for $c^{i}$ and $c^{j}$, where $i$, $j \in 0,
1, ..., n-1$ we get $r_{p}(c^{i+j}) = r_{p}(c^{i}) r_{p}(c^{j})$.
Hence, we have
\begin{equation}\eqlabel{defrp}
r_{p}(c^{i}) = r_{p}(gc^{i}) = d^{ip}
\end{equation}
for all $i = 0, 1, \cdots, n-1$. Now by using \equref{defrp} and
the fact that $r_{p}(xc^{i}) = r_{p}(gxc^{i}) = 0 $, for any $i =
0, 1, \cdots , n-1$ we can easily prove that $r_{p}: H_{4n, \,
\xi^{t}} \to k[C_{n}]$ is an algebra map. Finally, we are left to
prove that \equref{compdef20} holds. For instance we have:
\begin{eqnarray*}
r_{p}\Bigl( \bigl( gc^{i} \triangleleft^{l} r_{p}(c^{j})\bigl) \,
c^{j} \, \Bigl) &=& r_{p}\Bigl( \bigl( gc^{i} \triangleleft^{l}
d^{pj}\bigl) \, c^{j} \, \Bigl) = r_{p}(g c^{i+j}) = r_{p}(gc^{i})
r_{p}(c^{j})\\
r_{p}\Bigl( \bigl( xc^{i} \triangleleft^{l} r_{p}(c^{j})\bigl) \,
c^{j} \, \Bigl) &=& r_{p}\Bigl( \bigl( xc^{i} \triangleleft^{l}
d^{pj}\bigl) \, c^{j} \, \Bigl) = r_{p}(\xi ^{lpj} x c^{i+j}) = 0
= r_{p}(xc^{i}) r_{p}(c^{j})\\
r_{p}\Bigl( \bigl( y \triangleleft^{l} r_{p}((xc^{i})_{(1)})\bigl)
\, (xc^{i})_{(2)} \, \Bigl) &=& r_{p}\Bigl( \bigl( y
\triangleleft^{l} r_{p}(xc^{i})\bigl) \, c^{i} \, \Bigl) +
r_{p}\Bigl( \bigl( y \triangleleft^{l} r_{p}(gc^{i})\bigl) \,
xc^{i} \, \Bigl) \\
&=& 0 = r_{p}(y) r_{p}(xc^{i})
\end{eqnarray*}
for all $i$, $j = 0, 1, \cdots, n-1$ and $y \in H_{4n, \,
\xi^{t}}$. By a straightforward computation it can be seen that
\equref{compdef20} also holds for the remaining elements of the
$k$-basis of $H_{4n, \, \xi^{t}}$.

Now, the algebra structure of $(H_{4n, \, \xi^{t}})_{r_{p}}$ is
given by \equref{defoinmult}. Thus, in $(H_{4n, \,
\xi^{t}})_{r_{p}}$ we have:
\begin{eqnarray*}
g \bullet g &=& \bigl(g \triangleleft^{l} r_{p}(g)\bigl)g = (g
\triangleleft^{l} 1) g = g^{2} = 1\\
x \bullet x &=& (x \triangleleft^{l} r(x)) + (x \triangleleft^{l}
r(g)) x = x^{2} = 0\\
c^{n-1} \bullet c &=& \bigl(c^{n-1} \triangleleft^{l}
r_{p}(c)\bigl)c = \bigl(c^{n-1} \triangleleft^{l} c^{p}\bigl)c =
c^{n-1}c = c^{n} = 1\\
g \bullet x &=& (g \triangleleft^{l} r(x)) + (g \triangleleft^{l}
r(g)) x = gx = - xg = -(x \triangleleft^{l} r(g)) g = - x \bullet g\\
c \bullet x &=& (c \triangleleft^{l} r(x)) + (c \triangleleft^{l}
r(g)) x = cx = \xi^{t} xc = \xi^{t-lp} (\xi^{lp}xc) \\
&=& \xi^{t-lp} (x \triangleleft^{l} r_{p}(c))c = \xi^{t-lp} x
\bullet c
\end{eqnarray*}
This shows that $(H_{4n, \, \xi^{t}})_{r_{p}} = H_{4n, \, \xi^{t -
lp}}$.
\end{proof}

The bicrossed product $k[C_n] \bowtie^l H_{4n, \, \xi^{t}}$
associated to the matched pair from \thref{deform100} is the Hopf
algebra generated by $g$, $x$, $c$ and $d$ subject to the
relations:
$$
g^{2} = c^n = d^n = 1, \quad x^{2} = 0, \quad c g = g c, \quad cd
= dc, \quad gd = dg,
$$
$$
x g = -g x, \quad c x = \xi^{t} \, x c, \quad xd = \xi^{l} \, dx
$$
with the coalgebra structure given such that $g$, $c$, $d$ are
group-like elements and $x$ is a $(1, g)$-primitive element. We
denote by $H_{4n^2, \, \xi, \, t, \, l}$ this family of quantum
groups, for any $l$, $t \in \{0, 1, ..., \nu(n) - 1\}$ and $\xi$ a
generator of order $\nu(n)$ of the group $U_n(k)$. In what follows
we view $H_{4n^2, \, \xi, \, t, \, l}$ as a Hopf algebra extension
of the group algebra $k[C_n] = k\lan d \, | \, d^n = 1 \ran$. In
this context, $H_{4n, \, \xi^{t}}$ is a $k[C_n]$-complement of
$H_{4n^2, \, \xi, \, t, \, l}$.

Before stating the main result of this section we recall from
\cite[Theorem 4.10]{abm1} the number of types of isomorphisms of
Hopf algebras $H_{4n, \, \omega}$, where $\omega \in U_n(k)$; we
denote this number by $\# H_{4n, \, \omega}$. If $\nu(n) =
p_1^{\alpha_1} \cdots p_r^{\alpha_r}$ is the prime decomposition
of $\nu(n) = |U_n(k)|$ then we have:
\begin{equation}\eqlabel{izonumberh4}
\# H_{4n, \, \omega} = \left\{
\begin{array}{rcl} (\alpha_1 + 1)(\alpha_2 + 1) \cdots (\alpha_r + 1), & \mbox{if}& \nu(n) \hspace{2mm} \mbox{is odd}\\
\alpha_1(\alpha_2 + 1) \cdots (\alpha_r + 1), & \mbox{if}&
\nu(n)\hspace{2mm} \mbox{is even and} \hspace{2mm} p_{1}=2
 \end{array}\right.
 \end{equation}

The main result of this section now follows: it computes the
factorization index of the extension $k[C_{n}] \subseteq H_{4n^2,
\, \xi, \, t, \, 1}$.

\begin{theorem}\thlabel{findmare}
Let $k$ be a field of characteristic $\neq 2$, $n$ a positive
integer, $\xi$ a generator of $U_n(k)$ and $(k[C_{n}], \, H_{4n,
\, \xi^{\nu(n) - 1}}, \, \triangleleft^{1} ,\, \triangleright)$
the matched pair where $\triangleright$ is the trivial action and
$\triangleleft^{1}$ is given by \equref{mpair100} for $l=1$. Then:

$1)$ $(H_{4n, \, \xi^{\nu(n)-1}})_{r_{p}} = H_{4n, \, \xi^{\nu(n)
- 1 - p}}$, for all $p = 0, 1, \cdots , \nu(n) - 1$. Thus, any
$H_{4n, \, \xi^{p}}$ appears as a deformation of $H_{4n, \,
\xi^{\nu(n)-1}}$, for some deformation map $r_{p}$.

$2)$ Assume that $\nu(n)$ is odd and $\nu(n) = p_1^{\alpha_1}
\cdots p_r^{\alpha_r}$ is the prime decomposition of $\nu(n)$.
Then we have $(\alpha_{1}+1)(\alpha_{2}+1) ... (\alpha_{r} + 1)$
non-isomorphic deformations of $H_{4n, \, \xi^{\nu(n)-1}}$ and
thus $[ H_{4n^2, \, \xi, \, t, \, 1} \, : \, k[C_n] \, ]^f =
(\alpha_{1}+1)(\alpha_{2}+1) ... (\alpha_{r} + 1)$.

$3)$ Assume that $\nu(n)$ is even and $\nu(n) = 2^{\alpha_1}
p_2^{\alpha_2} \cdots p_r^{\alpha_r}$ is the prime decomposition
of $\nu(n)$. Then we have $\alpha_{1}(\alpha_{2}+1) ...
(\alpha_{r} + 1)$ non-isomorphic deformations of $H_{4n, \,
\xi^{\nu(n)-1}}$ and thus $[ H_{4n^2, \, \xi, \, t, \, 1} \, : \,
k[C_n] \, ]^f = \alpha_{1}(\alpha_{2}+1) ... (\alpha_{r} + 1)$.
\end{theorem}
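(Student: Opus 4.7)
My plan for part (1) is to obtain it as an immediate specialization of \thref{deform100}(2): setting $t=\nu(n)-1$ and $l=1$ in the identity $(H_{4n,\xi^{t}})_{r_p}=H_{4n,\xi^{t-lp}}$ yields the claimed formula. To derive the second assertion of (1), I would note that $\xi$ generates the cyclic group $U_n(k)$ of order $\nu(n)$ and that $\nu(n)$ divides $n$, so that as $p$ runs through $\{0,1,\ldots,n-1\}$ the residues of $\nu(n)-1-p$ modulo $\nu(n)$ exhaust $\{0,1,\ldots,\nu(n)-1\}$; hence every $H_{4n,\xi^j}$ appears as some $r_p$-deformation of $H_{4n,\xi^{\nu(n)-1}}$.

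For parts (2) and (3) the strategy is to chain together the three main classification results of \seref{3aplicatii}. First, \thref{descformelor} reduces the problem to describing $r$-deformations of the fixed complement $H_{4n,\xi^{\nu(n)-1}}$: every $k[C_n]$-complement of $H_{4n^2,\xi,t,1}$ is Hopf algebra isomorphic to one such deformation. Next, \thref{deform100}(2), applied to the canonical matched pair $(k[C_n],H_{4n,\xi^{\nu(n)-1}},\triangleleft^1,\triangleright)$, lists all deformation maps as $\{r_p\}_{p=0}^{n-1}$ and, via part (1), identifies their associated deformations as the family $H_{4n,\xi^{\nu(n)-1-p}}$. Finally, \thref{clasformelor} identifies the factorization index with the number of Hopf algebra isomorphism classes among these deformations.

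Putting the pieces together I conclude that $[H_{4n^2,\xi,t,1}:k[C_n]]^f$ equals the number of isomorphism classes in $\{H_{4n,\omega}:\omega\in U_n(k)\}$, which is exactly $\# H_{4n,\omega}$ as recorded in \equref{izonumberh4}. This yields the formulas stated in (2) and (3). The main conceptual work has already been performed in \thref{deform100}(2) and in the isomorphism classification \cite[Theorem 4.10]{abm1} recalled through \equref{izonumberh4}; the one subtlety that I would flag is that, although there are $n$ distinct deformation maps, each element of $U_n(k)$ is attained with the same multiplicity $n/\nu(n)$ by $p\mapsto\xi^{\nu(n)-1-p}$, so the equivalence relation $\sim$ of \thref{clasformelor} correctly collapses the list onto exactly the $\# H_{4n,\omega}$ classes of \equref{izonumberh4}.
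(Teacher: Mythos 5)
Your proposal is correct and follows essentially the same route as the paper, whose own proof is just a two-line appeal to Theorem~\ref{th:deform100} (with $l=1$, $t=\nu(n)-1$) together with the isomorphism count \equref{izonumberh4}; you have merely made explicit the intermediate appeals to Theorems~\ref{th:descformelor} and~\ref{th:clasformelor} that the paper leaves implicit. The surjectivity observation (that $p\mapsto \xi^{\nu(n)-1-p}$ hits every element of $U_n(k)$ because $\nu(n)\mid n$) and the multiplicity remark are correct and harmless refinements of the same argument.
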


\begin{proof}
$1)$ It follows by applying \thref{deform100} for $l = 1$ and $t =
\nu(n) - 1$. As any $H_{4n, \, \xi^{p}}$ appears as a deformation
of $H_{4n, \, \xi^{\nu(n)-1}}$ via some deformation map $r_{p}$,
the last two statements are just easy consequences of
\equref{izonumberh4}.
\end{proof}

\section{Classifying complements for Lie algebras}\selabel{deflie}
Let $\mathfrak{g} \subseteq \Xi$ be a Lie subalgebra of $\Xi$. A
Lie subalgebra $\mathfrak{h}$ of $\Xi$ is called a
\emph{complement} of $\mathfrak{g}$ in $\Xi$ (or a
\emph{$\mathfrak{g}$-complement} of $\Xi$) if $\Xi = \mathfrak{g}
+ \mathfrak{h}$ and $\mathfrak{g} \cap \mathfrak{h} = \{0\}$. In
this case we say that the Lie algebra $\Xi$ factorizes through
$\mathfrak{g}$ and $\mathfrak{h}$. Related to these concepts, the
bicrossed product associated to a matched pair of Lie algebras was
introduced in \cite{majid}. We collect here some basic facts: for
more details we refer the reader to \cite{majid}, \cite[Chapter
8]{majid2} or \cite{Masuoka}. A \emph{matched pair} of Lie
algebras is a quadruple $(\mathfrak{g}, \mathfrak{h},
\triangleright, \triangleleft)$, where $\mathfrak{g}$,
$\mathfrak{h}$ are Lie algebras, $\mathfrak{g}$ is a left
$\mathfrak{h}$-Lie module under $\triangleright: \mathfrak{h} \ot
\mathfrak{g} \to \mathfrak{g}$, $\mathfrak{h}$ is a right
$\mathfrak{g}$-Lie module under $\triangleleft: \mathfrak{h} \ot
\mathfrak{g} \to \mathfrak{h}$ and the following compatibilities
hold for all $a$, $b \in \mathfrak{g}$, $x$, $y \in \mathfrak{h}$:
\begin{equation}\eqlabel{mpLie1}
x \triangleright [a, \, b] = [x \triangleright a, \,  b] + [a, \,
x \triangleright b] + (x \triangleleft a) \triangleright b - (x
\triangleleft b) \triangleright a
 \end{equation}
\begin{equation}\eqlabel{mpLie2}
[x, \, y] \triangleleft a = [x,\, y \triangleleft a] + [x
\triangleleft a, \, y] + x \triangleleft (y \triangleright a) - y
\triangleleft (x \triangleright a)
\end{equation}
The fact that $\mathfrak{g}$ is a left $\mathfrak{h}$-Lie module
under $\triangleright: \mathfrak{h} \ot \mathfrak{g} \to
\mathfrak{g}$ and $\mathfrak{h}$ is a right $\mathfrak{g}$-Lie
module under $\triangleleft: \mathfrak{h} \ot \mathfrak{g} \to
\mathfrak{h}$ can be written explicitly as follows:
\begin{equation}\eqlabel{mpLie3}
[x, \, y] \triangleright a = x \triangleright (y \triangleright a)
- y \triangleright (x \triangleright a), \qquad x \triangleleft
[a, \, b] = (x \triangleleft a) \triangleleft b - (x \triangleleft
b) \triangleleft a
\end{equation}

The following is \cite[Proposition 8.3.2]{majid2}: If
$(\mathfrak{g}, \mathfrak{h}, \triangleright, \triangleleft)$ is a
matched pair of Lie algebras then the direct sum $\mathfrak{g}
\oplus \mathfrak{h}$ together with the bracket defined by:
\begin{equation}\eqlabel{bracketmpLie}
[a \oplus x, \, b \oplus y] = \bigl([a, \, b] + x \triangleright b
- y \triangleright a \bigl) \oplus \bigl([x, \, y] +
x\triangleleft b - y \triangleleft a \bigl)
\end{equation}
for all $a$, $b \in \mathfrak{g}$, $x$, $y \in\mathfrak{h}$ is a
Lie algebra, called the \emph{bicrossed product} of $\mathfrak{g}$
and $\mathfrak{h}$, and will be denoted by $\mathfrak{g} \bowtie
\mathfrak{h}$. The Lie algebra $\mathfrak{h} \cong \{0\}\oplus
\mathfrak{h} $ is a complement of $\mathfrak{g} \cong \mathfrak{g}
\oplus \{0\}$ in the bicrossed product $\mathfrak{g} \bowtie
\mathfrak{h}$. Conversely, if $\mathfrak{h}$ is a
$\mathfrak{g}$-complement of $\Xi$, then there exists a matched
pair of Lie algebras $(\mathfrak{g}, \mathfrak{h}, \triangleright,
\triangleleft)$ such that the corresponding bicrossed product
$\mathfrak{g} \bowtie \mathfrak{h}$ is isomorphic as a Lie algebra
with $\Xi$. The actions of the matched pair $(\mathfrak{g},
\mathfrak{h}, \triangleright, \triangleleft)$ arises from the
unique decomposition:
\begin{equation}\eqlabel{Lie456}
[x, \, a] = x \triangleright a \oplus x \triangleleft a
\end{equation}
for all $a \in \mathfrak{g}$, $x \in \mathfrak{h}$. The matched
pair constructed in \equref{Lie456} will be called the
\emph{canonical matched pair} associated to the
$\mathfrak{g}$-complement $\mathfrak{h}$ of $\Xi$.

For a Lie subalgebra $\mathfrak{g}$ of $\Xi$ we denote by
${\mathcal F} (\mathfrak{g}, \, \Xi)$ the isomorphism classes of
$\mathfrak{g}$-complements of $\Xi$. The \emph{factorization
index} of $\mathfrak{g}$ in $\Xi$ is defined as $[\Xi :
\mathfrak{g}]^f := |\, {\mathcal F} (\mathfrak{g}, \, \Xi) \,|$.

\begin{definition} \delabel{deformaplie}
Let $(\mathfrak{g}, \mathfrak{h}, \triangleright, \triangleleft)$
be a matched pair of Lie algebras. A $k$-linear map $r:
\mathfrak{h} \to \mathfrak{g}$ is called a \emph{deformation map}
of the matched pair $(\mathfrak{g}, \mathfrak{h}, \triangleright,
\triangleleft)$ if the following compatibility holds for any $x$,
$y \in \mathfrak{h}$:
\begin{equation}\eqlabel{factLie}
r\bigl([x, \,y]\bigl) \, - \, \bigl[r(x), \, r(y)\bigl] = r \bigl(
\, y \triangleleft r(x) - x \triangleleft r(y) \, \bigl) + x
\triangleright r(y) - y \triangleright r(x)
\end{equation}
\end{definition}

We denote by ${\mathcal D}{\mathcal M} \, (\mathfrak{h},
\mathfrak{g} \, | \, (\triangleright, \triangleleft) )$ the set of
all deformation maps of the matched pair $(\mathfrak{g},
\mathfrak{h}, \triangleright, \triangleleft)$. The right hand side
of \equref{factLie} measures how far a deformation map is from
being a Lie algebra map. Using this concept the following
deformation of a given Lie algebra is proposed:

\begin{theorem}\thlabel{deforLie}
Let $\mathfrak{g}$ be a Lie subalgebra of $\Xi$, $\mathfrak{h}$ a
given $\mathfrak{g}$-complement of $\Xi$ and $r: \mathfrak{h} \to
\mathfrak{g}$ a deformation map of the associated canonical
matched pair $(\mathfrak{g}, \mathfrak{h}, \triangleright,
\triangleleft)$.

$(1)$ Let $f_{r}: \mathfrak{h} \to \Xi = \mathfrak{g} \oplus
\mathfrak{h}$ be the $k$-linear map defined for any $x \in
\mathfrak{h}$ by:
$$f_{r}(x) = r(x) \oplus x$$
Then $\widetilde{\mathfrak{h}} : = {\rm Im}(f_{r})$ is a
$\mathfrak{g}$-complement of $\Xi$.

$(2)$ $\mathfrak{h}_{r} := \mathfrak{h}$, as a $k$-module, with
the new bracket defined for any $x$, $y \in \mathfrak{h}$ by:
\begin{equation}\eqlabel{rLiedef}
[x, \, y]_{r} := [x, \, y] + x \triangleleft r(y) - y
\triangleleft r(x)
\end{equation}
is a Lie algebra called the $r$-deformation of $\mathfrak{h}$.
Furthermore, $\mathfrak{h}_{r} \cong \widetilde{\mathfrak{h}}$, as
Lie algebras.
\end{theorem}

\begin{proof}
$(1)$ To start with, we will prove that $\widetilde{\mathfrak{h}}
= \{\bigl(r(x) \oplus x \bigl) ~|~ x \in \mathfrak{h}\}$ is a Lie
subalgebra of $\Xi = \mathfrak{g} \bowtie \mathfrak{h}$. Indeed,
for all $x$, $y \in \mathfrak{h}$ we have:
\begin{eqnarray*}
\bigl[r(x) \oplus x, \, r(y) \oplus y\bigl]
&\stackrel{\equref{bracketmpLie}}{=}& \bigl(\underline{\bigl[r(x),
\, r(y)\bigl] + x \triangleright r(y) - y \triangleright
r(x)}\bigl) \,\, \oplus \\
&& \bigl([x, \, y] + x \triangleleft r(y) - y \triangleleft
r(x)\bigl)\\
&\stackrel{\equref{factLie}}{=}& r ([x, \, y] + x \triangleleft
r(y) - y \triangleleft r(x))\, \oplus \, \bigl([x, \, y] + x
\triangleleft r(y) - y \triangleleft r(x)\bigl)
\end{eqnarray*}
Moreover, it is straightforward to see that $\mathfrak{g} \cap
\widetilde{\mathfrak{h}} = \{0\}$ and $a \oplus x = \bigl(a -
r(x)\bigl) \oplus \bigl(r(x) \oplus x \bigl) \in \mathfrak{g} +
\widetilde{\mathfrak{h}}$. Therefore, $\widetilde{\mathfrak{h}}$
is a $\mathfrak{g}$-complement of $\Xi$.

$(2)$ We denote by $\widetilde{f_{r}}$ the $k$-linear isomorphism
from $\mathfrak{h}$ to $\widetilde{\mathfrak{h}}$ induced by
$f_{r}$. We will prove that $\widetilde{f_{r}}$ is also a Lie
algebra map if we consider on $\mathfrak{h}$ the bracket given by
\equref{rLiedef}. Indeed, for any $x$, $y \in \mathfrak{h}$ we
have:
\begin{eqnarray*}
\widetilde{f_{r}}\bigl([x,\,
y]_{r}\bigl)&\stackrel{\equref{rLiedef}}{=}&
\widetilde{f_{r}}\bigl([x, \, y] + x \triangleleft r(y) - y
\triangleleft
r(x)\bigl)\\
&{=}& \underline{r \bigl([x, \, y]\bigl) + r (x \triangleleft
r(y)) - r(y \triangleleft r(x))} \oplus [x, \, y] + x
\triangleleft r(y) - y \triangleleft r(x)\\
&\stackrel{\equref{factLie}}{=}& [r(x), \, r(y)] + x
\triangleright r(y) - y \triangleright r((x) \oplus [x, \, y] + x
\triangleleft r(y) - y \triangleleft r(x)\\
&\stackrel{\equref{bracketmpLie}}{=}& [r(x) \oplus x, \, r(y)
\oplus y] = [\widetilde{f_{r}}(x), \, \widetilde{f_{r}}(y)]
\end{eqnarray*}
Therefore, $\mathfrak{h}_{r}$ is a Lie algebra and the proof is
now finished.
\end{proof}

We are now able to describe all complements of a Lie subalgebra
$\mathfrak{g}$ of $\Xi$.

\begin{theorem} \thlabel{descrierecomlie}
Let $\mathfrak{g}$ be a Lie subalgebra of $\Xi$, $\mathfrak{h}$ a
given $\mathfrak{g}$-complement of $\Xi$ with the associated
canonical matched pair of Lie algebras $(\mathfrak{g},
\mathfrak{h}, \triangleright, \triangleleft)$. Then
$\overline{\mathfrak{h}}$ is a $\mathfrak{g}$-complement of $\Xi$
if and only if there exists an isomorphism of Lie algebras
$\overline{\mathfrak{h}} \cong \mathfrak{h}_{r}$, for some
deformation map $r: \mathfrak{h} \to \mathfrak{g}$ of the matched
pair $(\mathfrak{g}, \mathfrak{h}, \triangleright,
\triangleleft)$.
\end{theorem}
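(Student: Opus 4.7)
The plan is to reduce the statement to the Hopf algebra analogue \thref{descformelor} via the universal enveloping algebra functor. The ``if'' direction is immediate: given a deformation map $r: \mathfrak{h} \to \mathfrak{g}$, \thref{defcompLie} shows that $\mathfrak{h}_r$ is a $\mathfrak{g}$-complement of $\mathfrak{g} \bowtie \mathfrak{h}$, and since $\mathfrak{g} \bowtie \mathfrak{h} \cong \Xi$ through the canonical matched pair, any Lie algebra isomorphic to some $\mathfrak{h}_r$ is automatically a $\mathfrak{g}$-complement of $\Xi$.

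For the ``only if'' direction, let $\overline{\mathfrak{h}}$ be an arbitrary $\mathfrak{g}$-complement of $\Xi$. First I would lift the factorization to enveloping algebras: $\Xi = \mathfrak{g} \bowtie \overline{\mathfrak{h}}$ together with Masuoka's result gives an isomorphism of Hopf algebras $U(\Xi) \cong U(\mathfrak{g}) \bowtie U(\overline{\mathfrak{h}})$, and the canonical multiplication map $U(\mathfrak{g}) \otimes U(\overline{\mathfrak{h}}) \to U(\Xi)$ is bijective, so $U(\overline{\mathfrak{h}})$ is a $U(\mathfrak{g})$-complement of $U(\Xi)$. By \leref{Lielemma}, the canonical matched pair of Hopf algebras attached to the factorization of $U(\Xi)$ through $U(\mathfrak{g})$ and $U(\mathfrak{h})$ coincides with the $U$-image $(U(\mathfrak{g}), U(\mathfrak{h}), \widehat{\triangleright}, \widehat{\triangleleft})$ of the canonical Lie matched pair $(\mathfrak{g}, \mathfrak{h}, \triangleright, \triangleleft)$.

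Now \thref{descformelor} applied to the Hopf algebra extension $U(\mathfrak{g}) \subseteq U(\Xi)$ produces a deformation map $\widehat{r}: U(\mathfrak{h}) \to U(\mathfrak{g})$ of $(U(\mathfrak{g}), U(\mathfrak{h}), \widehat{\triangleright}, \widehat{\triangleleft})$ such that $U(\overline{\mathfrak{h}}) \cong U(\mathfrak{h})_{\widehat{r}}$ as Hopf algebras. The previous proposition restricts $\widehat{r}$ to a deformation map $r := \widehat{r}|_{\mathfrak{h}}: \mathfrak{h} \to \mathfrak{g}$ of the Lie matched pair, while \prref{Lie123} produces an isomorphism $U(\mathfrak{h}_{r}) \cong U(\mathfrak{h})_{\widehat{r}}$ of Hopf algebras. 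Composing these yields $U(\overline{\mathfrak{h}}) \cong U(\mathfrak{h}_r)$, and passing to primitive elements (using $P(U(\mathfrak{a})) = \mathfrak{a}$ in characteristic zero, combined with the fact that a Hopf algebra isomorphism between enveloping algebras restricts to a Lie algebra isomorphism between their primitives) recovers the desired isomorphism $\overline{\mathfrak{h}} \cong \mathfrak{h}_{r}$.

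The main obstacle is the ``only if'' direction: one must transport a Hopf-algebra deformation map back to a Lie-algebra one while preserving every compatibility. The coalgebra structure on $U(\mathfrak{h})$ forces $\widehat{r}$ to send primitives to primitives, so the restriction is well-defined on $\mathfrak{h}$, but the verification that the cocentrality condition \equref{0aa} and the compatibility \equref{compdef} collapse on primitives to precisely \equref{factLie} is the subtle point; this was already handled by the preceding proposition, and it is also what makes \prref{Lie123} meaningful. Once these compatibilities are in place, the proof is essentially a composition of isomorphisms established previously, together with the bridge provided by \leref{Lielemma} between the two canonical matched pairs.
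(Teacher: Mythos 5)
Your proposal is correct and follows essentially the same route as the paper: lift the factorization to enveloping algebras, identify the canonical matched pairs via \leref{Lielemma}, apply \thref{descformelor} to get $\widehat{r}$, restrict it to a Lie deformation map $r$, and use \prref{Lie123} to conclude $U(\overline{\mathfrak{h}}) \cong U(\mathfrak{h}_r)$. The only cosmetic difference is at the end, where you pass to primitive elements (using $P(U(\mathfrak{a}))=\mathfrak{a}$ in characteristic zero) while the paper instead matches up the two matched pairs of Hopf algebras and descends to the corresponding Lie matched pairs; both are valid, and your explicit treatment of the ``if'' direction via \thref{defcompLie} is a welcome addition the paper leaves implicit.
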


\begin{proof}
Let $\overline{\mathfrak{h}}$ be an arbitrary
$\mathfrak{g}$-complement of $\Xi$. Since $\Xi = \mathfrak{g}
\oplus \mathfrak{h} = \mathfrak{g} \oplus \overline{\mathfrak{h}}$
we can find four $k$-linear maps:
$$
u: \mathfrak{h} \to \mathfrak{g}, \quad
v: \mathfrak{h} \to \overline{\mathfrak{h}},
\quad t:\overline{\mathfrak{h}} \to \mathfrak{g},
\quad w: \overline{\mathfrak{h}} \to \mathfrak{h}
$$
such that for all $x \in \mathfrak{h}$ and $y \in
\overline{\mathfrak{h}}$ we have:
\begin{equation} \eqlabel{lie111}
x = u(x) \oplus v(x), \qquad y = t(y) \oplus w(y)
\end{equation}
By applying \equref{lie111} for $x = w(y) \in \mathfrak{h}$, $y
\in \overline{\mathfrak{h}}$, we get:
\begin{eqnarray*}
-t(y) \oplus y = w(y) \stackrel{\equref{lie111}}{=}
u\bigl(w(y)\bigl) \oplus v\bigl(w(y)\bigl)
\end{eqnarray*}
Therefore, by the unique decomposition in a direct sum, we obtain
$v\bigl(w(y)\bigl) = y$ and $u\bigl(w(y)\bigl) = -t(y)$, for all
$y \in \overline{\mathfrak{h}}$. In the same manner it can be
proved that $w\bigl(v(x)\bigl) = x$ and $t\bigl(v(x)\bigl) =
-u(x)$, for all $x \in \mathfrak{h}$. In particular, we proved
that $v: \mathfrak{h} \to \overline{\mathfrak{h}}$ is a $k$-linear
isomorphism. We denote by $\tilde{v}: \mathfrak{h} \to
\mathfrak{g} \bowtie \mathfrak{h}$ the composition:
$$
\tilde{v} : \, \mathfrak{h} \, \stackrel{v} {\longrightarrow} \,
\overline{\mathfrak{h}} \, \stackrel{i}{\hookrightarrow} \, \Xi \,
= \,\mathfrak{g} \bowtie \mathfrak{h}
$$
More precisely, we have $\tilde{v}(x) = v(x)
\stackrel{\equref{lie111}}{=} -u(x) \oplus x$, for all $x \in
\mathfrak{h}$. We denote $r := -u$ and we will prove that $r$ is a
deformation map and $\overline{\mathfrak{h}} \cong
\mathfrak{h}_{r}$. Indeed, $\overline{\mathfrak{h}} = {\rm Im} (v)
= {\rm Im} (\tilde{v})$ is a Lie subalgebra of $\Xi = \mathfrak{g}
\bowtie \mathfrak{h}$ and therefore we have:
\begin{eqnarray*}
[r(x) \oplus x, \, r(y) \oplus y]
&\stackrel{\equref{bracketmpLie}}{=}& \bigl([r(x), \, r(y)] + x
\triangleright r(y) - y \triangleright r(x)\bigl) \,\, \oplus \\
&& \bigl([x, \, y] + x \triangleleft r(y) - y \triangleleft
r(x)\bigl) \\
&=& r(z) \oplus z
\end{eqnarray*}
for some $z \in \mathfrak{h}$. Thus, we obtain:
\begin{equation}\eqlabel{lie113}
r(z) = [r(x), \, r(y)] + x \triangleright r(y) - y \triangleright
r(x), \qquad z = [x, \, y] + x \triangleleft r(y) - y
\triangleleft r(x)
\end{equation}

By applying $r$ to the second part of \equref{lie113} we get:
\begin{eqnarray*}
r(z) = r([x, \, y] ) + r(x \triangleleft r(y)) - r(y \triangleleft
r(x)) = [r(x), \, r(y)] + x \triangleright r(y) - y \triangleright
r(x)
\end{eqnarray*}
Therefore, $r$ is a deformation map of the matched pair
$(\mathfrak{g}, \mathfrak{h}, \triangleright, \triangleleft)$.
Moreover, we have:
$$
[v(x), \, v(y)] = v(z) \stackrel{\equref{lie113}}{=} v \bigl([x,
\, y] + x \triangleleft r(y) - y \triangleleft r(x)\bigl)
\stackrel{\equref{rLiedef}}{=} v([x, \, y]_{r})
$$
that is, $v: \mathfrak{h}_{r} \to \overline{\mathfrak{h}}$ is a
Lie algebra map and the proof is now finished.
\end{proof}

In order to classify all complements we need to introduce the
following:

\begin{definition}\delabel{equivLie}
Let $(\mathfrak{g}, \mathfrak{h}, \triangleright, \triangleleft)$
be a matched pair of Lie algebras. Two deformation maps $r$, $R:
\mathfrak{h} \to \mathfrak{g}$ are called \emph{equivalent} and we
denote this by $r \sim R$ if there exists $\sigma: \mathfrak{h}
\to \mathfrak{h}$ a $k$-linear automorphism of $\mathfrak{h}$ such
that for any $x$, $y\in \mathfrak{h}$:
\begin{equation}\eqlabel{equivLiemaps}
\sigma \bigl([x, \, y]\bigl) - \bigl[\sigma(x), \, \sigma(y)\bigl]
= \sigma(x) \triangleleft R \bigl(\sigma(x)\bigl) - \sigma\bigl(x
\triangleleft r(y)\bigl) - \sigma(y) \triangleleft R
\bigl(\sigma(x)\bigl) + \sigma \bigl(y \triangleleft r(x)\bigl)
\end{equation}
\end{definition}

As a conclusion of this section we obtain the answer to the (CCP)
for Lie algebras:

\begin{theorem}\thlabel{clasformelorLie}
Let $\mathfrak{g}$ be a Lie subalgebra of $\Xi$, $\mathfrak{h}$ a
$\mathfrak{g}$-complement of $\Xi$ and $(\mathfrak{g},
\mathfrak{h}, \triangleright, \triangleleft)$ the associated
canonical matched pair. Then:

$(1)$ $\sim$ is an equivalence relation on ${\mathcal D}{\mathcal
M} \, (\mathfrak{h}, \mathfrak{g} \, | \, (\triangleright,
\triangleleft) )$. We denote by  ${\mathcal H}{\mathcal A}^{2}
(\mathfrak{h}, \mathfrak{g} \, | \, (\triangleright,
\triangleleft) )$ the quotient set ${\mathcal D}{\mathcal M} \,
(\mathfrak{h}, \mathfrak{g} \, | \, (\triangleright,
\triangleleft) )/ \sim$.

$(2)$ There exists a bijection between the isomorphism classes of
all $\mathfrak{g}$-complements of $\Xi$ and ${\mathcal H}{\mathcal
A}^{2} (\mathfrak{h}, \mathfrak{g} \, | \, (\triangleright,
\triangleleft) )$. In particular, the factorization index of
$\mathfrak{g}$ in $\Xi$ is computed by the formula:
$$
[\Xi : \mathfrak{g}]^f = | {\mathcal H}{\mathcal A}^{2}
(\mathfrak{h}, \mathfrak{g} \, | \, (\triangleright,
\triangleleft) )|
$$
\end{theorem}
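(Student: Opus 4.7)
The plan is to mirror the strategy used in the proof of \thref{clasformelor} for Hopf algebras, exploiting the fact that in the Lie setting the picture is actually simpler: Lie algebras are just vector spaces with a bracket, so there is no coalgebra or unit condition to monitor. By \thref{descrierecomlie}, every $\mathfrak{g}$-complement of $\Xi$ is already isomorphic as a Lie algebra to some $r$-deformation $\mathfrak{h}_r$, where $r : \mathfrak{h} \to \mathfrak{g}$ is a deformation map of the canonical matched pair. Hence classifying all $\mathfrak{g}$-complements of $\Xi$ reduces to deciding when two such deformations $\mathfrak{h}_r$ and $\mathfrak{h}_R$ are isomorphic as Lie algebras.

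To settle this, I would first use the fact that $\mathfrak{h}_r$ and $\mathfrak{h}_R$ share the same underlying $k$-vector space $\mathfrak{h}$. Consequently, any Lie algebra isomorphism $\mathfrak{h}_r \to \mathfrak{h}_R$ is nothing but a $k$-linear automorphism $\sigma : \mathfrak{h} \to \mathfrak{h}$ satisfying $\sigma([x,y]_r) = [\sigma(x), \sigma(y)]_R$ for all $x$, $y \in \mathfrak{h}$. Expanding both sides with the bracket formula \equref{rLiedef} and cancelling the common term $\sigma([x,y]) - [\sigma(x), \sigma(y)]$ leaves exactly the compatibility \equref{equivLiemaps} defining $r \sim R$. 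Thus the relation $r \sim R$ coincides with the existence of a Lie algebra isomorphism $\mathfrak{h}_r \cong \mathfrak{h}_R$, from which reflexivity, symmetry, and transitivity are inherited automatically; this proves (1).

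For (2), I would define the map
\[
\Psi : {\mathcal H}{\mathcal A}^{2} (\mathfrak{h}, \mathfrak{g} \, | \, (\triangleright, \triangleleft) ) \longrightarrow {\mathcal F}(\mathfrak{g}, \Xi), \qquad \overline{r} \longmapsto \widehat{\mathfrak{h}_r},
\]
where $\widehat{\mathfrak{h}_r}$ denotes the isomorphism class of $\mathfrak{h}_r$. This map is well-defined by the equivalence established above, and it takes values in ${\mathcal F}(\mathfrak{g}, \Xi)$ because \thref{defcompLie} guarantees that each $\mathfrak{h}_r$ is a $\mathfrak{g}$-complement of $\mathfrak{g} \bowtie \mathfrak{h} \cong \Xi$. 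Injectivity of $\Psi$ is again immediate from the equivalence, while surjectivity is precisely the content of \thref{descrierecomlie}. The formula $[\Xi : \mathfrak{g}]^f = |{\mathcal H}{\mathcal A}^{2}(\mathfrak{h}, \mathfrak{g} \, | \, (\triangleright, \triangleleft))|$ then follows by counting.

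The main obstacle is purely notational bookkeeping in the expansion of $\sigma([x,y]_r) = [\sigma(x), \sigma(y)]_R$: one must carefully track the ordering of arguments in the right $\mathfrak{g}$-action on $\mathfrak{h}$ and verify that the resulting identity is literally \equref{equivLiemaps}. All the substantive analytic content (Jacobi verification, interaction between deformation and the matched pair axioms, the fact that $\mathfrak{h}_r$ embeds as a complement inside $\mathfrak{g} \bowtie \mathfrak{h}$, and the descent from Hopf algebras to Lie algebras) has already been done in \thref{deforLie}, \thref{defcompLie}, \prref{Lie123}, and \thref{descrierecomlie}, so no new heavy computation is required here.
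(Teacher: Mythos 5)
Your proof is correct and follows essentially the same route as the paper's: reduce to $r$-deformations via \thref{descrierecomlie}, identify Lie algebra isomorphisms $\mathfrak{h}_r \cong \mathfrak{h}_R$ with $k$-linear automorphisms $\sigma$ satisfying the compatibility of \deref{equivLie}, and conclude that $\overline{r} \mapsto \mathfrak{h}_{r}$ is a well-defined bijection onto ${\mathcal F}(\mathfrak{g},\Xi)$. (Your expansion of $\sigma([x,y]_r)=[\sigma(x),\sigma(y)]_R$ in fact produces $\sigma(x)\triangleleft R(\sigma(y))$ as the first right-hand term, which corrects an apparent typo in \equref{equivLiemaps}.)
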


\begin{proof}
It follows from \thref{descrierecomlie} that in order to classify
all $\mathfrak{g}$-complements of $\Xi$ it is enough to consider
only $r$-deformations of $\mathfrak{h}$, for various deformation
maps $r : \mathfrak{h} \to \mathfrak{g}$. Now let $r$, $R :
\mathfrak{h} \to \mathfrak{g}$ be two deformation maps. As
$\mathfrak{h}_r = \mathfrak{h}_R : = \mathfrak{h}$ as $k$-spaces,
we obtain that the Lie algebras $\mathfrak{h}_r$ and
$\mathfrak{h}_R$ are isomorphic if and only if there exists
$\sigma : \mathfrak{h}_{r} \to \mathfrak{h}_{R}$ a $k$-linear
isomorphism which is also a Lie algebra map. Using
\equref{rLiedef} we obtain that $\sigma$ is a Lie algebra map if
and only if the compatibility condition \equref{equivLiemaps}
holds, i.e. $r \sim R$. Hence, $r \sim R$ if and only if $\sigma:
\mathfrak{h}_r \to \mathfrak{h}_R$ is an isomorphism of Lie
algebras. Thus we obtain that $\sim$ is an equivalence relation on
${\mathcal D}{\mathcal M} (\mathfrak{h}, \mathfrak{g} \, | \,
(\triangleright, \triangleleft) )$ and the map
$$
{\mathcal H}{\mathcal A}^{2} (\mathfrak{h}, \mathfrak{g} \, | \,
(\triangleright, \triangleleft) ) \to {\mathcal F} (\mathfrak{g},
\Xi),  \qquad \overline{r} \mapsto \mathfrak{h}_{r}
$$
is a bijection between sets, where $\overline{r}$ is the
equivalence class of $r$ via the relation $\sim$.
\end{proof}

\begin{example}
Let $k$ be a field of ${\rm char} (k) \neq 2$ and $\Xi$ the
$4$-dimensional Lie algebra with $\{e_1, e_2, f_1, f_2 \}$ as a
basis and the bracket given by:
$$
[e_1, e_2] = 2 e_1, \,\,\, [e_1, f_1] = e_2, \,\,\, [e_2, f_2] = 2
f_1
$$
Let $\mathfrak{g}$ be the Lie subalgebra of $\Xi$ with basis
$\{e_1, e_2 \}$. Then $[\Xi : \mathfrak{g}]^f = 2$.

Indeed, let $\mathfrak{h}$ be the abelian Lie algebra of dimension
$2$ with basis $\{f_1, f_2\}$. Then $\mathfrak{h}$ is a
$\mathfrak{g}$-complement of $\Xi$ with the associated canonical
matched $(\mathfrak{g}, \, \mathfrak{h}, \, \triangleleft, \,
\triangleright)$ given as follows:
$$
f_1 \triangleright e_1 := - e_2, \quad f_1 \triangleleft e_2 := -2
f_1
$$
It is straightforward to see that $r_{c}: \mathfrak{h} \to
\mathfrak{g}$ given by $r(f_1) := 0$, $r(f_2) := c f_2$, for some
$c\in k$ is a deformation map of the matched pair $(\mathfrak{g},
\mathfrak{h}, \triangleright, \triangleleft)$. Furthermore, the
$r_c$-deformation of $\mathfrak{h}$ has the bracket $[f_1, \,
f_2]_{r_{c}} := - 2 c f_1$. As this is not an abelian Lie algebra
for $c \neq 0$, it follows that $\mathfrak{h}_{r_c}$ is not
isomorphic to $\mathfrak{h}$. Since there are only two types of
Lie algebras of dimension $2$ we obtain that $[\Xi :
\mathfrak{g}]^f = 2$.
\end{example}

\section{Outlooks and open problems} \selabel{probnoi}
In this paper we solve the (CCP) for the category of Lie algebras
and Hopf algebras respectively. The common tool used is the
bicrossed product construction. All the results proven above can
serve as a model for obtaining similar theories for the (CCP) in
other categories $\Cc$ where the bicrossed product was introduced,
such as: (co)algebras, $C^*$-algebras or von Neumann algebras, Lie
groups, locally compact groups or locally compact quantum groups,
groupoids or quantum grupoids, multiplier Hopf algebras, etc.
Another direction for further inquiry is given by the following
three open questions related to the results of this work:

\textbf{Question 1:} \emph{Let $\tau : H \ot H \to k$ be a
normalized Sweedler cocycle and $H_{\tau}$ be Doi's \cite{Doi}
deformation of the Hopf algebra $H$. Does there exist a Hopf
algebra $A$, a matched pair of Hopf algebras $(A, H,
\triangleleft, \triangleright)$ and a deformation map $r : H \to
A$ such that $H_{\tau} = H_r$, where $H_r$ is the $r$-deformation
of $H$ in the sense of \thref{deformatANM}?}

Having in mind \thref{clasformelor} it is natural to ask:

\textbf{Question 2:} \emph{Does there exist an example of an
extension of finite dimensional Hopf algebras $A \subset E$ having
an infinite factorization index $[E : A]^f$?}

We have approached the (CCP) for right $A$-complements of a given
extension $A \subset E$ of Hopf algebras. The same theory can be
developed for left $A$-complements. If $E$ has a bijective
antipode then $H$ is a right $A$-complement if and only if $H$ is
a left $A$-complement (\cite[Proposition 3.1]{abm1}); i.e. in this
case $[E : A]_r^f = [E : A]_l^f$, where $[E : A]_r^f$ (resp. $[E :
A]_r^f$) denotes the factorization index corresponding to right
(resp. left) $A$-complements. In general, we ask the following:

\textbf{Question 3:} \emph{Does there exist an example of an
extension $A \subset E$ of Hopf algebras such that $[E : A]_r^f
\neq [E : A]_l^f$?}

\section*{Acknowledgment}
We are deeply indebted to the referee for his/her very important
comments which shortened some of the arguments previously used.
Nevertheless, the direct approach used in the case of Lie algebras
was also his/her suggestion.

\end{document}